\newtheorem{theorem}{Theorem}
\newtheorem{lemma}[theorem]{Lemma}
\newtheorem{proposition}[theorem]{Proposition}
\title{The Region Smoothing Swap Game}
\author{Allison Henrich, Inga Johnson, Jonah Ostroff}
\begin{document}

\maketitle

\abstract{We introduce a topological combinatorial game called the Region Smoothing Swap Game. The game is played on a game board derived from the connected shadow of a link diagram on a (possibly non-orientable) surface by smoothing at crossings. Moves in the game are performed on regions of the diagram and can switch the direction of certain crossings' smoothings. The players' goals relate to the connectedness of the diagram produced by game play.}  

\section{Introduction.}
We introduce a new game, the \textbf{Region Smoothing Swap Game}, played on a game board derived from the connected shadow of a link diagram on a surface by smoothing at the diagram's crossings. This game can be viewed as a hybrid of the Link Smoothing Game, studied in \cite{link-smooth}, and the Region Unknotting Game, studied in \cite{RUG}. 

Suppose that $D$ is a connected diagram of a link shadow on some (possibly non-orientable) surface $S$. That is, $D$ is a connected link diagram where under- and over-strand information is unspecified at the crossings.  The unspecified crossings are called {\bf precrossings}.  There are two possible options for smoothing each precrossing, shown in  Figure~\ref{smooth}. When every precrossing of a link diagram $D$ has been smoothed, the result is called a {\bf smoothed state} or a {\bf smoothing} of $D$. A  {\bf region smoothing swap} is an operation that replaces one smoothed state by the state corresponding to changing the smoothings of all crossings on the boundary of a particular region in the link diagram. Figure~\ref{exampleGameBoard} shows a link together with two of its connected smoothed states. (The locations of the smoothed precrossings are indicated in the diagram by grey disks.) These two states are related by a single region smoothing swap, performed on the highlighted region.  
\begin{figure}[htbp] 
\begin{center} \includegraphics[scale=.3]{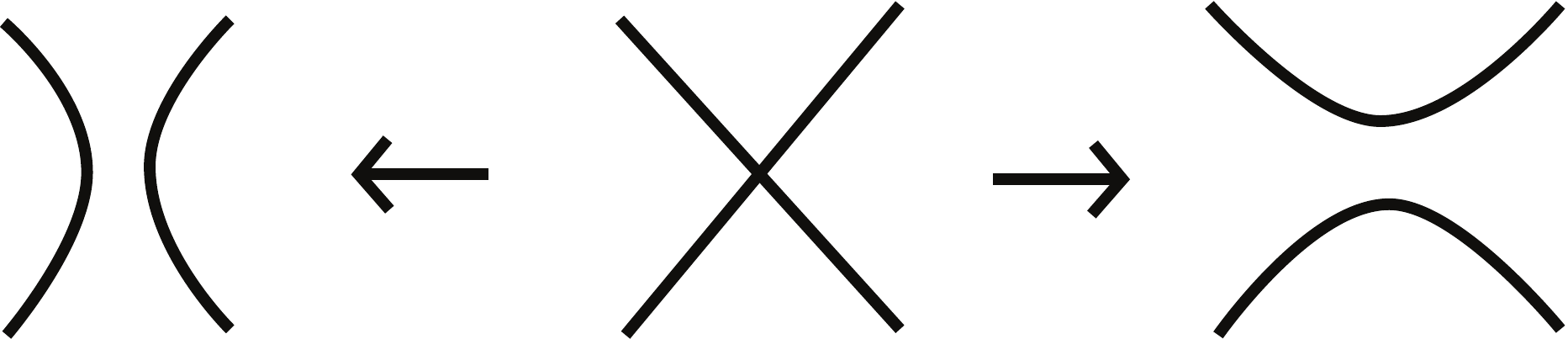}\end{center}
\caption{{\bf Smoothing a precrossing.}}
\label{smooth}
\end{figure}

\begin{figure}[htbp] 
\begin{center} \includegraphics[scale=.25]{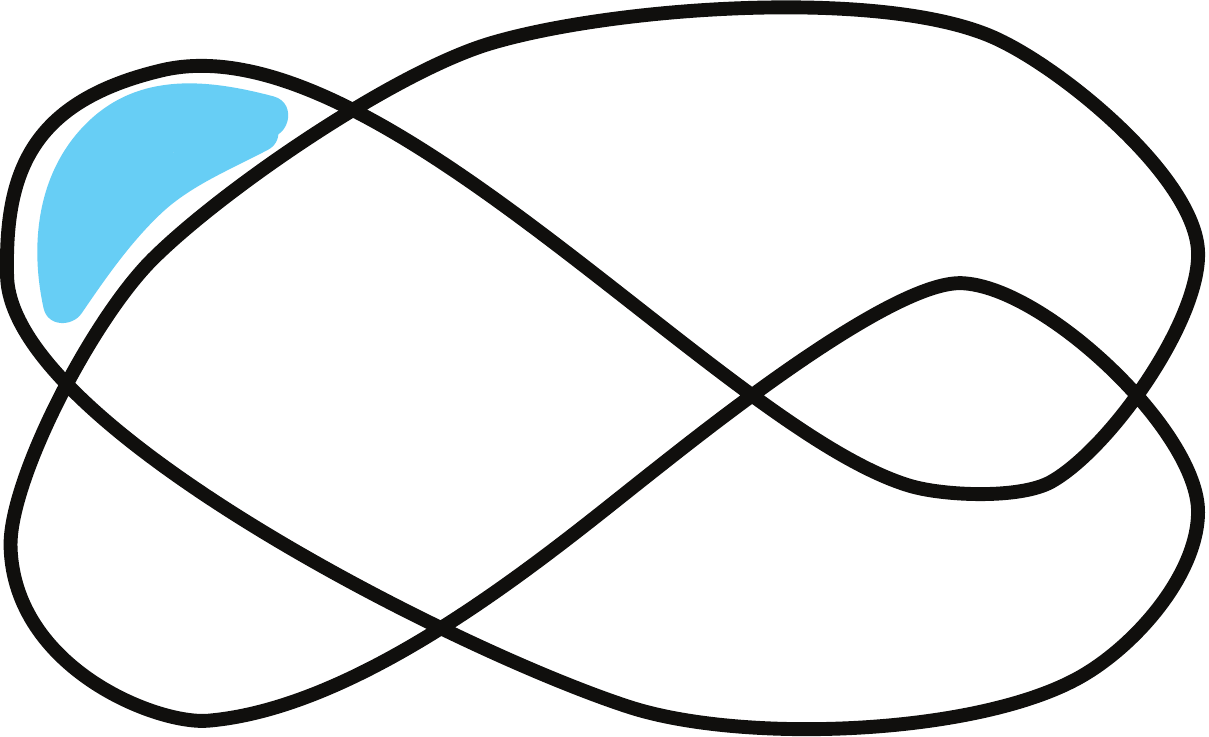} \\
\includegraphics[scale=.2]{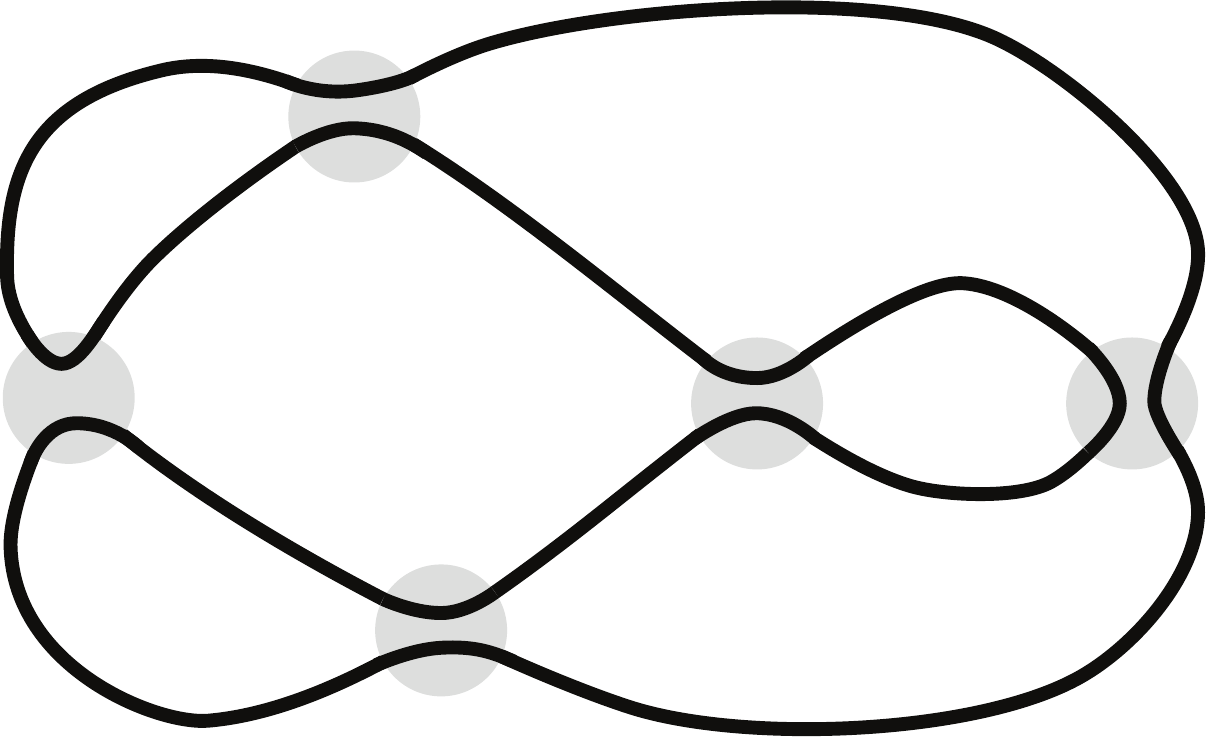} \hspace{.5in} \includegraphics[scale=.2]{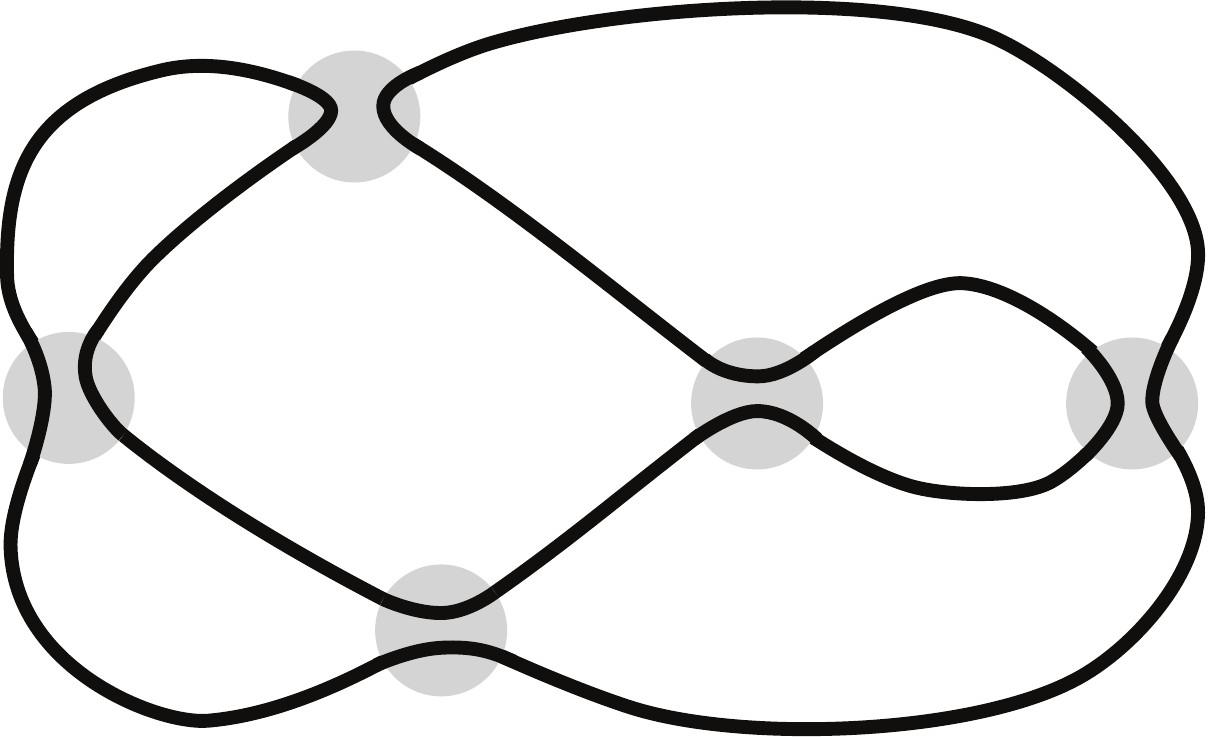}
\end{center}
\caption{{\bf The shadow of a twist knot and two connected smoothed states of the diagram. Unless otherwise specified, we assume that our diagrams lie on a sphere.}}
\label{exampleGameBoard}
\end{figure}

Game play begins on a connected smoothed state of link diagram $D$ on a surface $S$. Two players take turns making moves. During game play, the smoothed diagram may become disconnected. A move consists of choosing a region in the link diagram and either performing a region smoothing swap at that region or keeping the state as-is.  The two players continue selecting regions and moving until all regions of $D$ on $S$ have been selected exactly once.  One player plays with the goal of producing a connected smoothed state when game play ends while the other player wants the game to end with a disconnected diagram.  We will call the player that wants to keep the diagram {\bf c}onnected {\bf Player  C} and the player whose goal is to {\bf d}isconnect the diagram {\bf Player D}.  

An example game is played in Figure~\ref{gameplay}.  As each region is selected, the blue shading indicates that there is no change to the smoothings along the boundary of the region; the green shading indicates the adjacent smoothings are switched.

\begin{figure}[htbp] \begin{center}
\hspace{-0.4in}\begin{tabular}
{c *5{>{\centering\arraybackslash}m{0.6in}} @{}m{0pt}@{}}    
Begin & \includegraphics[scale=.18]{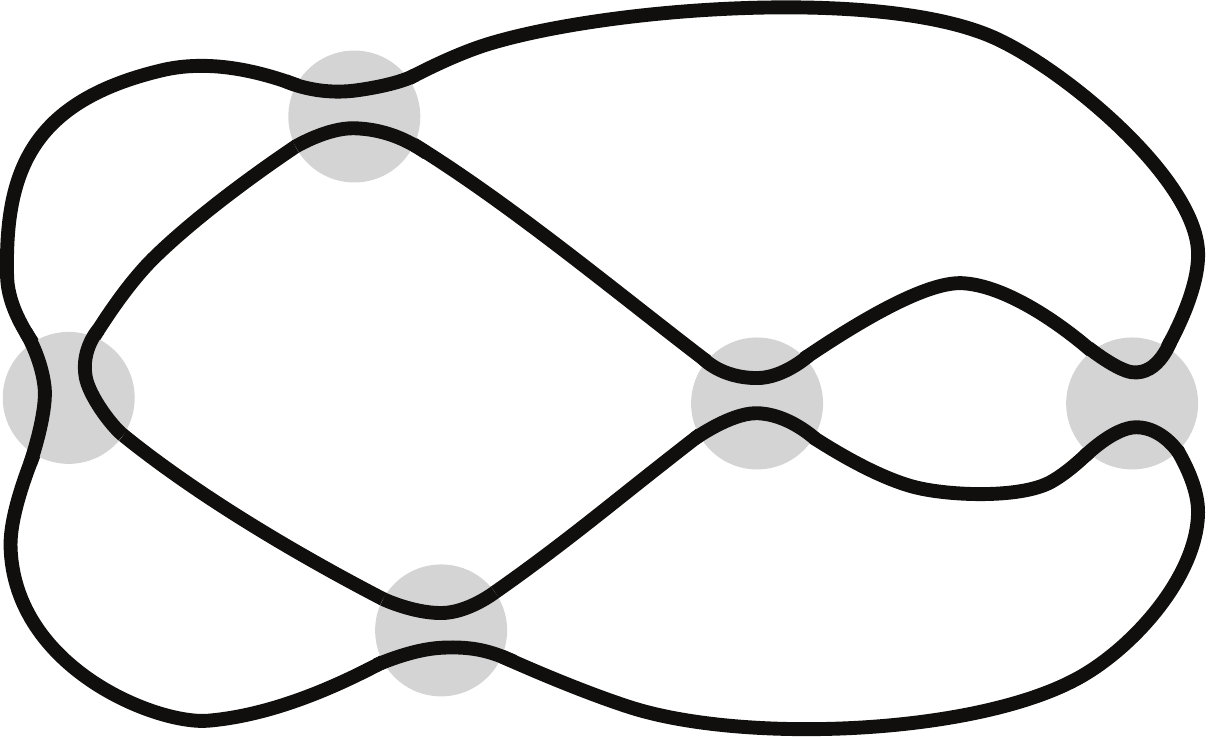}& \hspace{0.3in} &Move 4,  {\bf D} &\includegraphics[scale=.18]{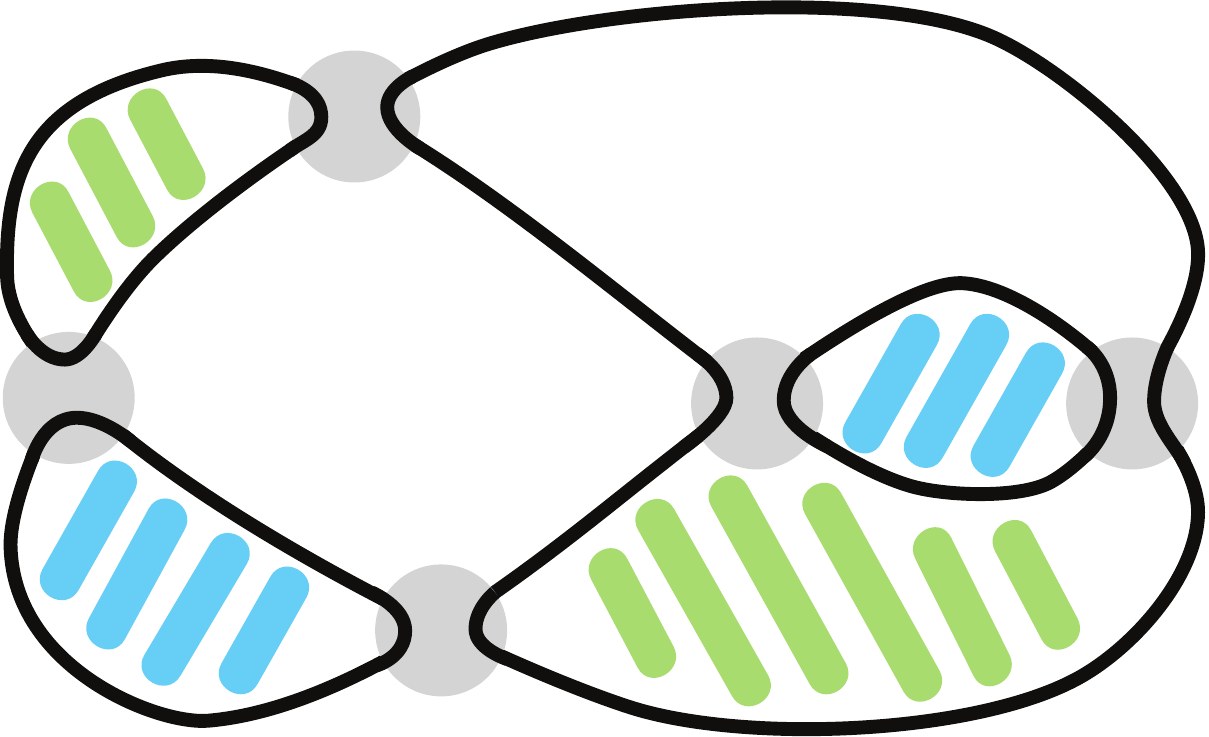}\\
Move 1, {\bf C}& \includegraphics[scale=.18]{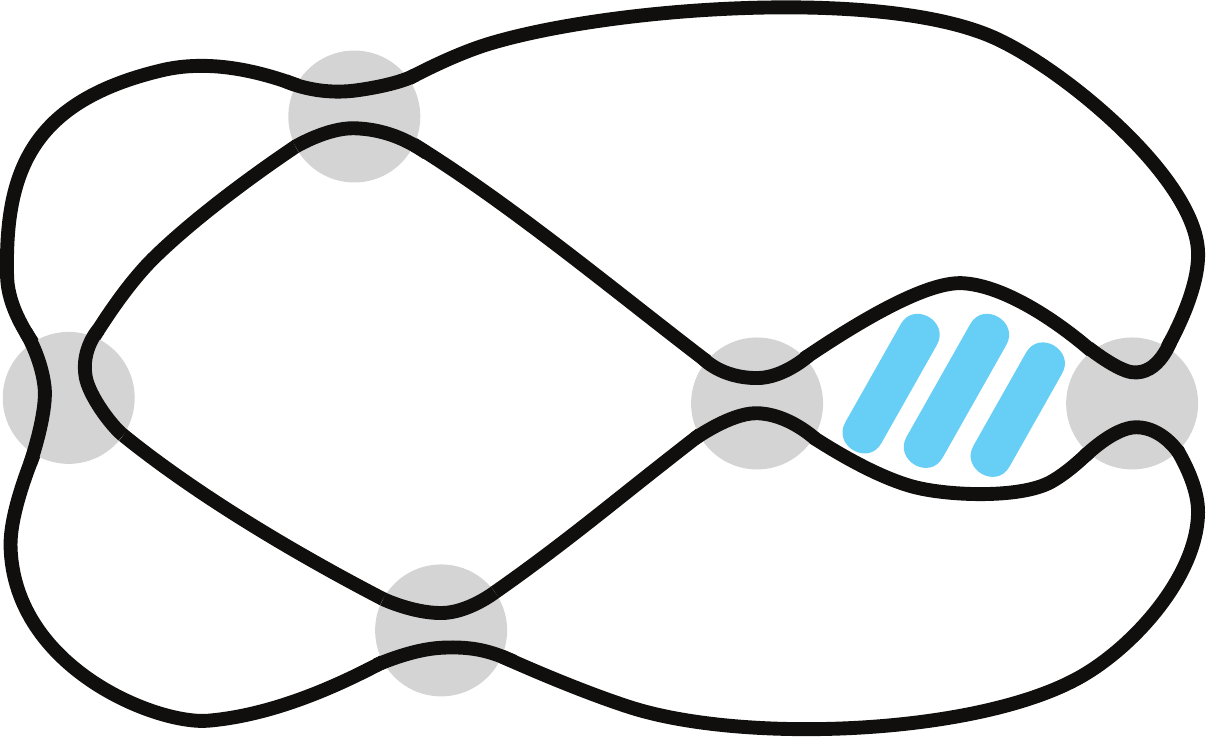}& &Move 5, {\bf C}&\includegraphics[scale=.18]{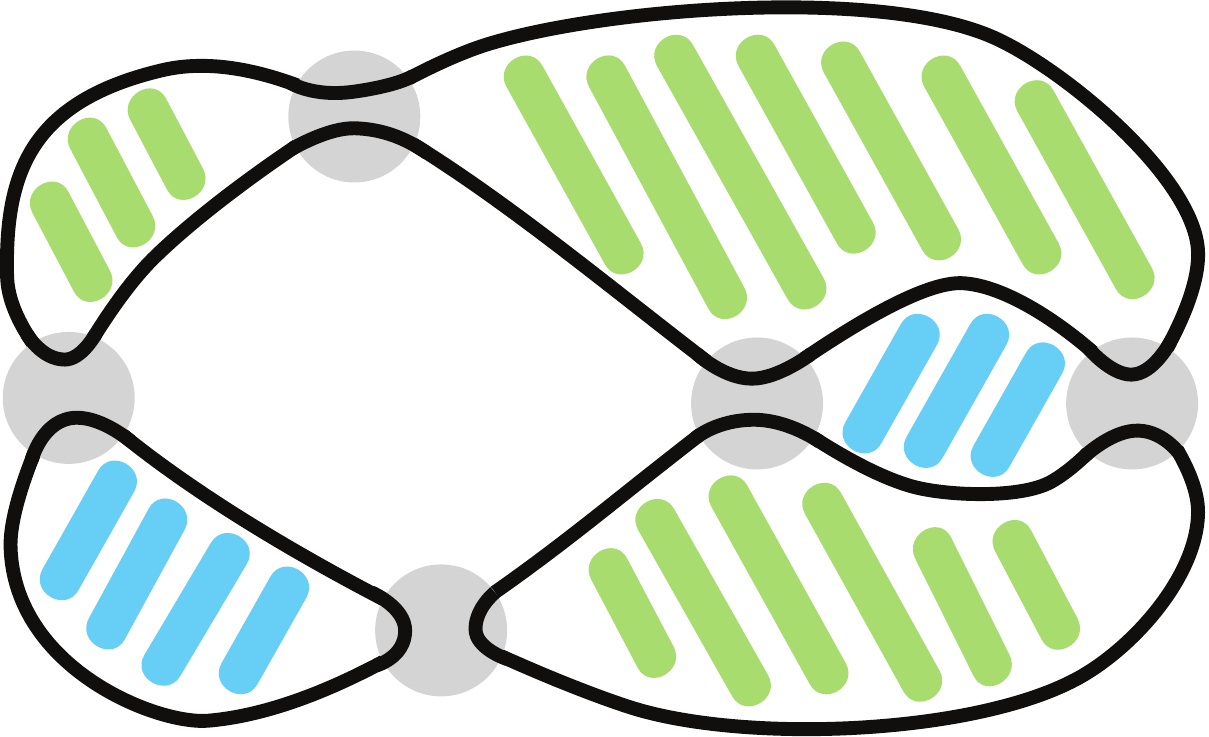}\\
Move 2, {\bf D}& \includegraphics[scale=.18]{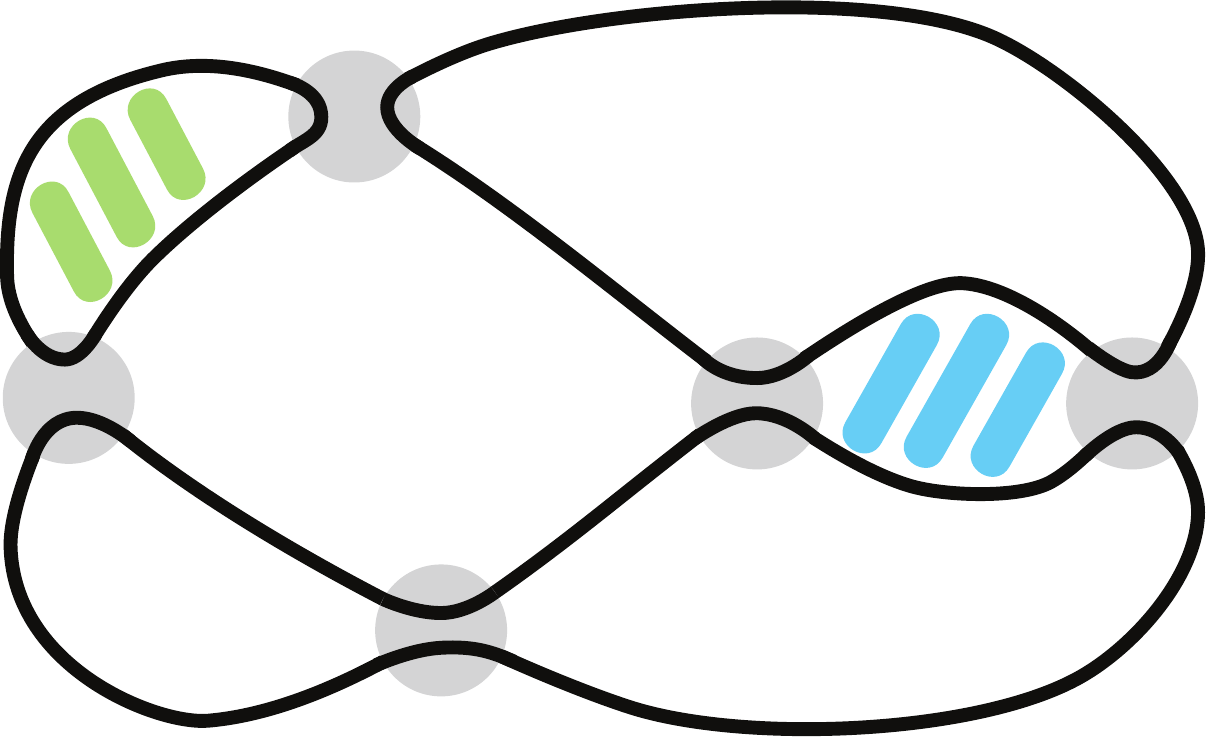}& &Move 6,  {\bf D}&\includegraphics[scale=.18]{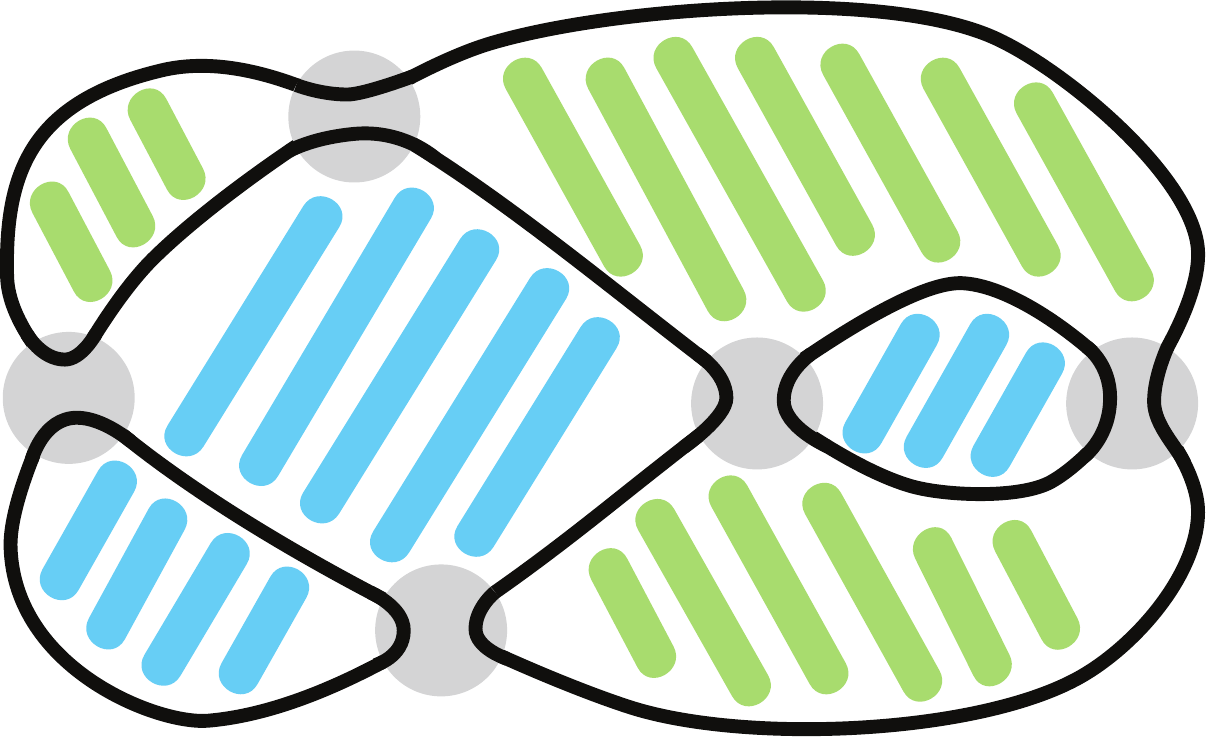}\\
Move 3,  {\bf C}& \includegraphics[scale=.18]{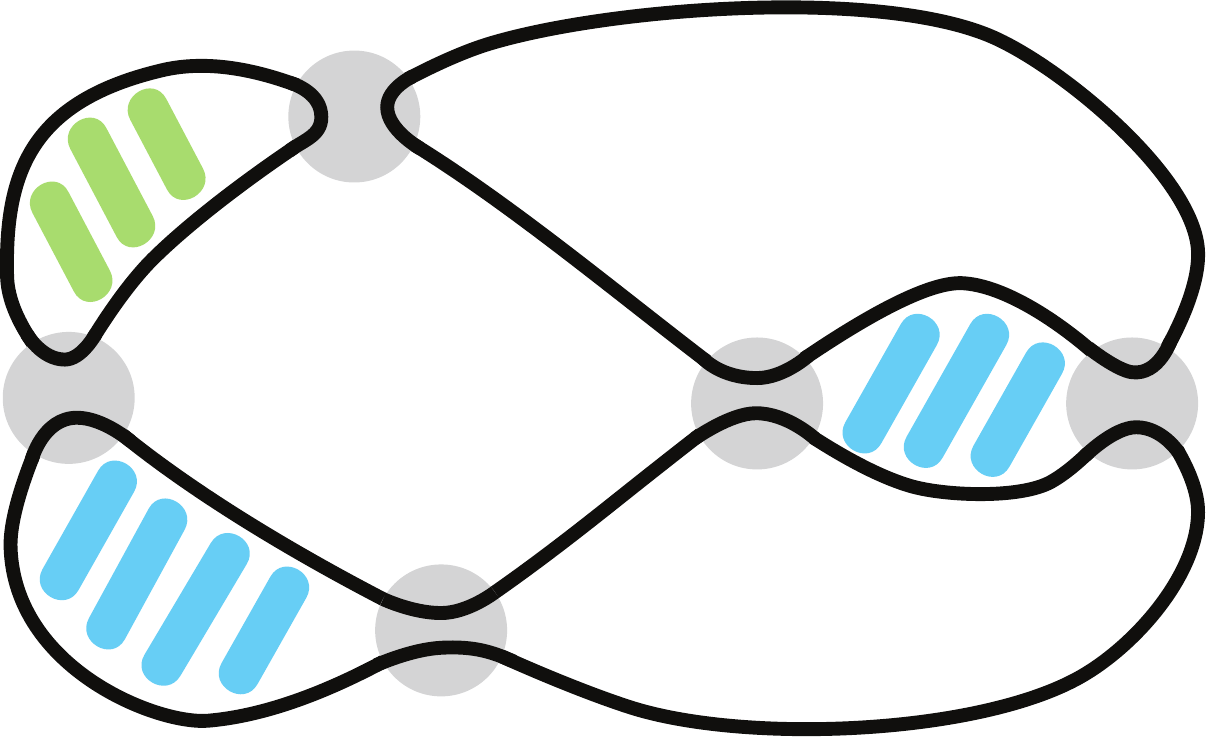}& &Move 7, {\bf C}&\includegraphics[scale=.18]{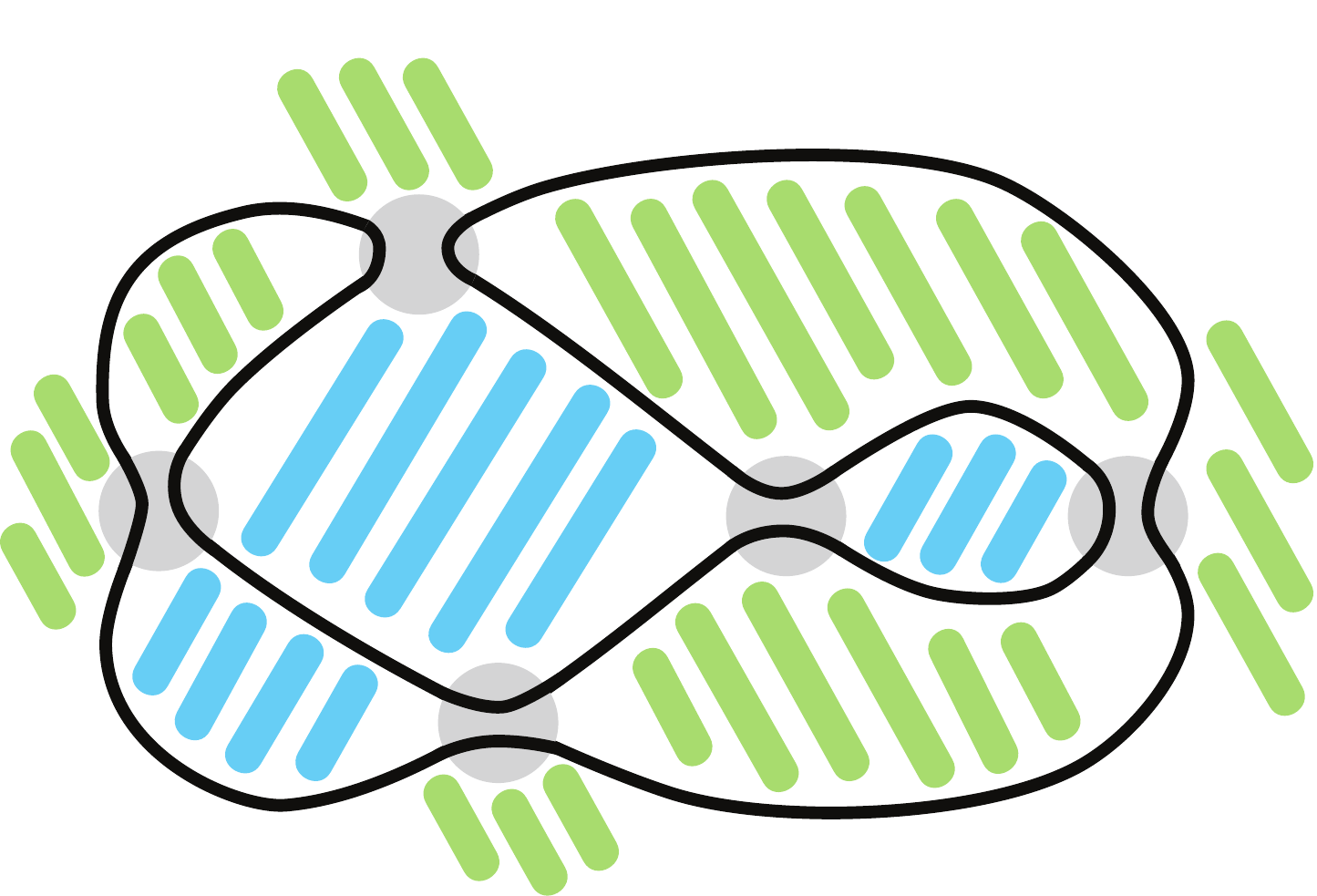}
\end{tabular}
\caption{{\bf The Region Smoothing Swap Game is played on a 5-crossing knot shadow that lies on a sphere. C plays first and wins.
}}
\label{gameplay}\end{center}
\end{figure}

In this paper, our goal is to classify connected smoothed states of link shadows on various surfaces according to which player has a winning strategy in the Region Smoothing Swap Game.  In general, Player D typically has a strong advantage in this game. This is because an arbitrary choice of smoothings for the crossings in a link diagram is far more likely to result in a state that has two or more connected components than in a state with a single component. Indeed, some links have equivalence classes of smoothed states (where equivalence is generated by the smoothing swap operation) that don't contain {\em any} connected states.   So, we aim to identify link shadows with smoothed states on which Player C has a winning strategy. We always begin with a game board that is connected to ensure that we are in an equivalence class that contains at least one connected state. 

The example in Figure~\ref{gameplay} shows game play on a link diagram and a particular smoothed state where C moves first and C wins.  In fact, on this particular game board, C can always win if she moves first. To understand why Player C has a winning strategy, we first translate the game into a game on graphs to simplify our analysis.

\section{Graphs and the Region Smoothing Swap Game.}\label{graphs}

It can be useful to shift from viewing our game on a link diagram to viewing it as a game on the corresponding checkerboard graph. In this new setting, the regions of the smoothed link diagram are represented by either a vertex {\it or} a face of the graph.  The smoothed crossings are represented by the edges of the graph.  We see that the connectedness of a subset of edges in the graph captures the connectedness of the corresponding link diagram. 

Let's recall how to form the checkerboard graph corresponding to a link shadow.  We start by checkerboard coloring the shadow, then placing a vertex in each shaded region. Whenever two shaded regions are connected by a precrossing, we connect the associated vertices with an edge. The smoothing choice in the initial connected smoothed link shadow is indicated by either turning an edge ``on" (pictured in bold) or ``off" (pictured as dashed) depending on whether the two adjacent regions are joined or separated by the chosen smoothing.  For example, in Figure~\ref{Board2Graph}, we show the initial game board from Figure~\ref{gameplay} and the associated graph with edges on or off according to the choice of smoothing in the diagram. 

\begin{figure}[htbp] \begin{center}
 \begin{tabular}{lcl}
 \includegraphics[scale=.3]{GamePlayMove0-eps-converted-to.pdf} & \hspace{0.4in} &  \includegraphics[scale=.3]{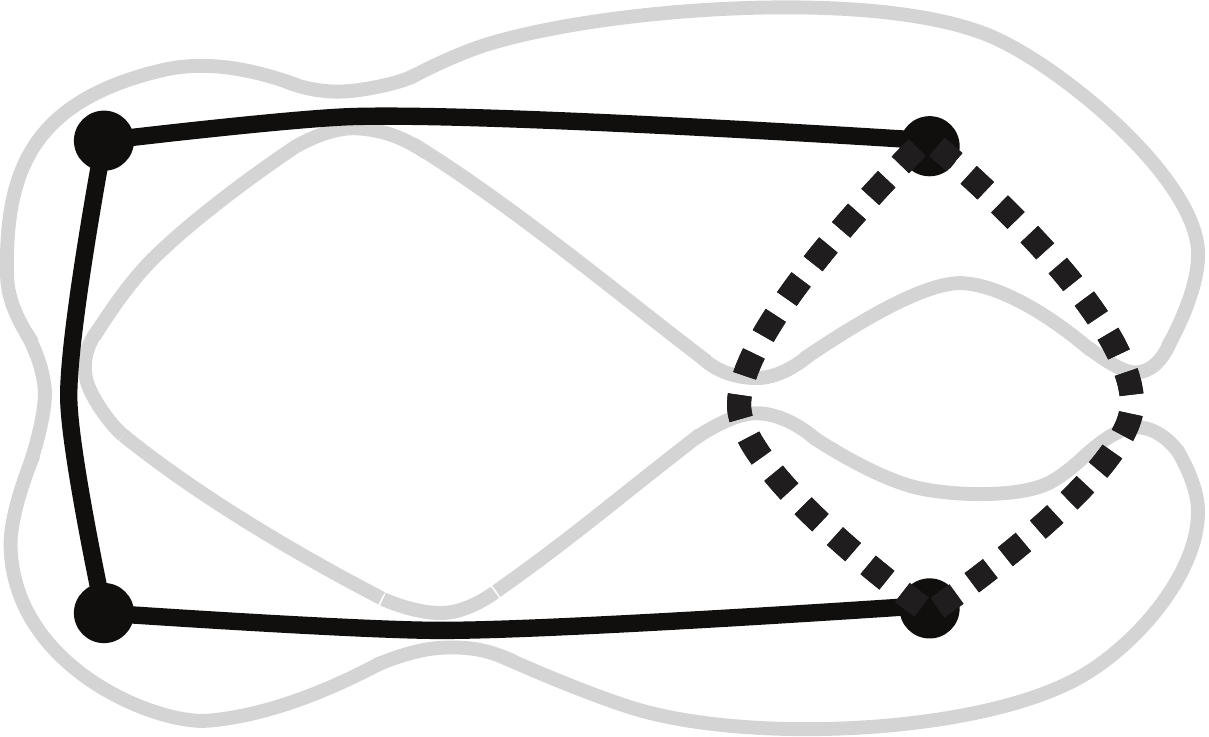}
\end{tabular}
\caption{{\bf Region Smoothing Swap Game board and the associated graph.}}
\label{Board2Graph}\end{center}
\end{figure}

In the graph setting, the moves are made both on the regions of the graph and on the vertices of the graph, since both represent regions in the smoothed diagram game board.  So, when a vertex of the graph is selected, a player may (1) switch every incident edge---i.e., turn those edges that are on to off and those that are off to on---or (2) keep all incident edges as-is.  When a region of the graph is selected, a player may (1) switch every edge along its boundary---from on to off and off to on---or (2) keep all edges along the boundary as-is. The connectedness of the associated smoothed state corresponds to the requirement that the subgraph of edges that are on (shown in bold) is a tree that contains every vertex.

One benefit of the graph model of the Region Smoothing Swap Game is that the moves and the current state of the game can be easily represented using vectors with integer entries modulo 2.  To encode the current state of the game, we begin by enumerating each precrossing of the link shadow $c_1, c_2, \dots, c_n$ and each region of the diagram $r_1, r_2, \dots, r_k$.  The associated graph will inherit the labeling of  $c_1, c_2, \dots, c_n$ on its edges and the labeling $r_1, r_2, \dots, r_k$ on its vertices and regions.  Observe that if the game board is assumed to lie on a sphere, then an Euler characteristic computation shows that $n + 2 =k$.

\

For a graph with $n$ edges, the initial game state vector ${V}_0=[v_1\text{ }v_2\text{ }\dots \text{ }v_n]$ is defined by $v_i=0$ if the $i^{th}$ edge is off, and  $v_i=1$ if the $i^{th}$ edge is on, for $1\leq i \leq n$.  We use the notation ${V}_j$ to denote the current game state vector after $j$ moves have been made, for $1\leq j \leq k$. 

A move on a region or vertex in the graph is denoted by $R_i = \epsilon_i r_i$  where $r_i= [m_{i,1}\text{ } m_{i,2}\text{ } \dots\text{ } m_{i,n}]$  is defined by $m_{i,j} =1$ if the edge $c_j$ is in the boundary of region $r_i$ or incident to vertex $r_i$, and  $m_{i,j} =0$ otherwise.  The value of $\epsilon_i $ is $1$ or $0$ depending on whether the player wants to switch all edge values or keep them as is.  The effect of the move $R_i$ on game state vector $V_j$  is $V_{j+1} = V_j+R_i$ modulo 2.  

\

 As an example, the game play from Figure~\ref{gameplay} is represented with vectors in Figure~\ref{VectorGameplay}.

\begin{figure}[htbp] \begin{center}
 \begin{tabular}{cm{1in} cm{2in} c l}
Begin  &\hspace{-0.45in} \includegraphics[scale=.23]{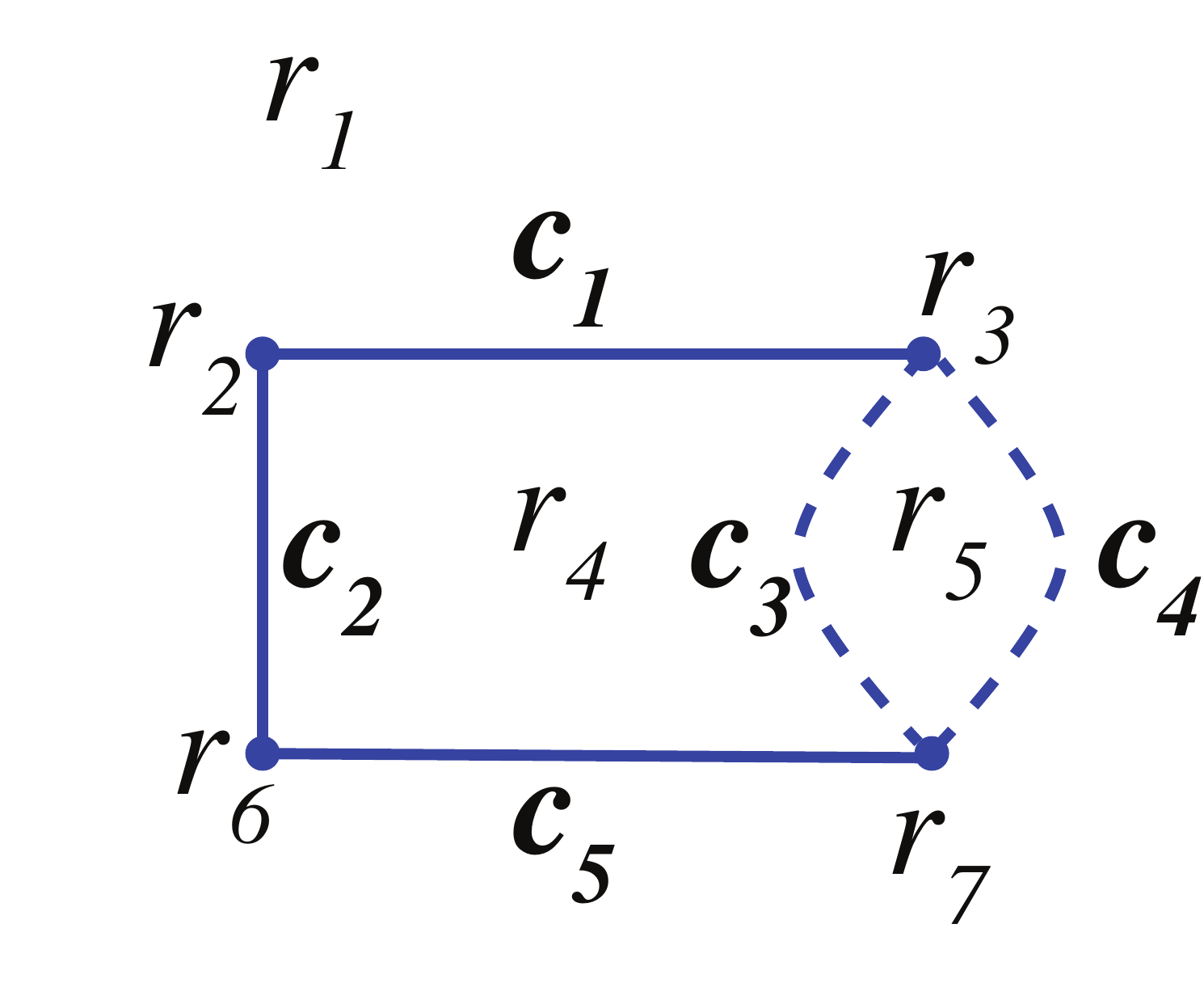}& & \begin{tabular}{crl} $\left.\right.$ \hspace{13pt}&\hspace{2pt} $V_0= $& \hspace{2pt} $[1\text{ } 1\text{ }  0\text{ } 0\text{ }  1] $\\ \end{tabular} \\
Move 1, {\bf C}& \includegraphics[scale=.23]{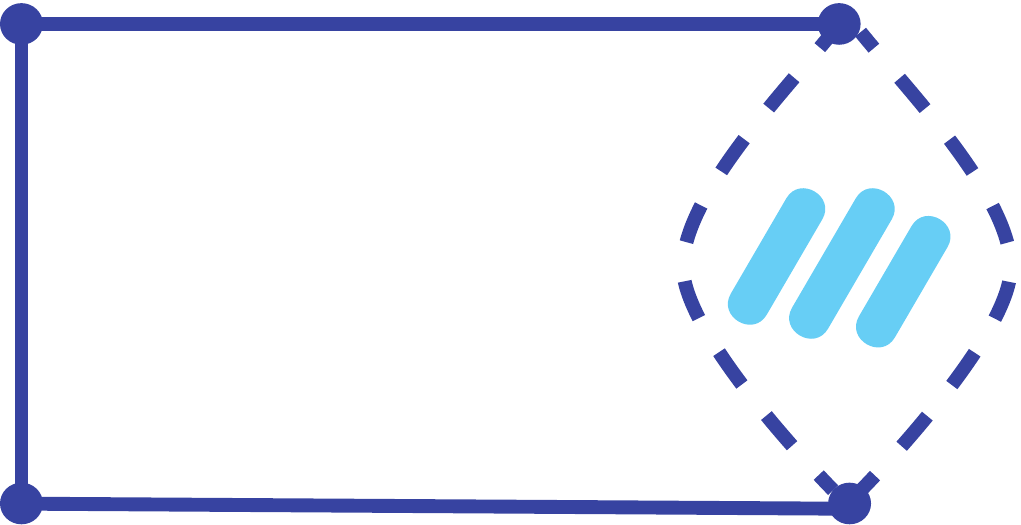}& &  \begin{tabular}{rrl} 
+& $R_5=$& $0 [0\text{ }  0\text{ }  1\text{ }  1\text{ }  0]$\\ \hline 
&$V_1 = $& \hspace{6pt}$[1\text{ }  1\text{ }  0\text{ }  0\text{ }  1] $\end{tabular}\\
Move 2, {\bf D}& \includegraphics[scale=.23]{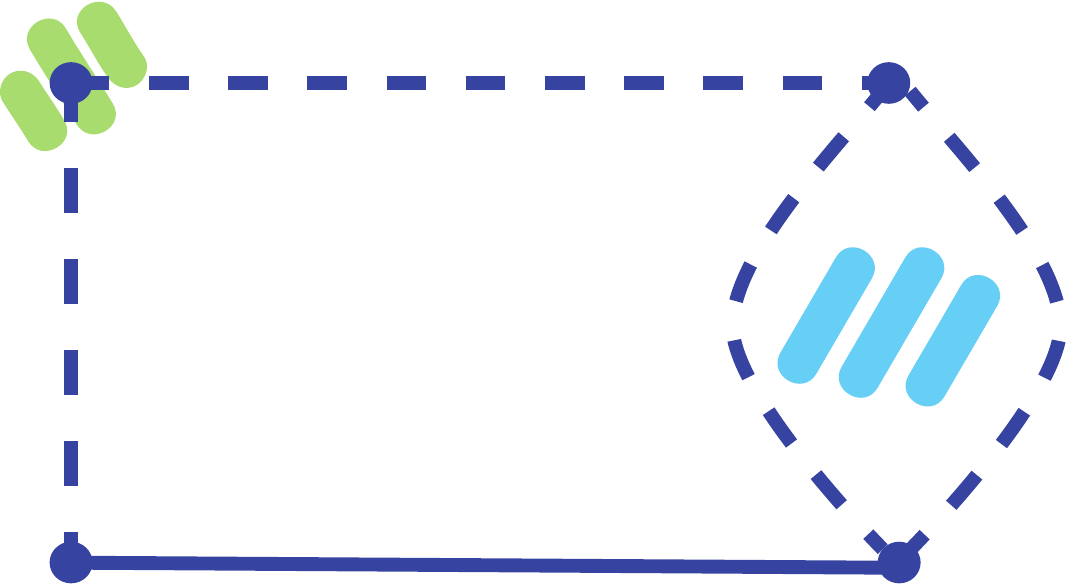}& & \begin{tabular}{rrl} 
+& $R_2=$& $1 [1\text{ }  1\text{ }  0\text{ }  0\text{ }  0]$\\ \hline \text{ }
&$V_2 = $& \hspace{6pt}$[0\text{ } 0\text{ } 0\text{ } 0\text{ } 1] $\end{tabular}\\
Move 3,  {\bf C}& \includegraphics[scale=.23]{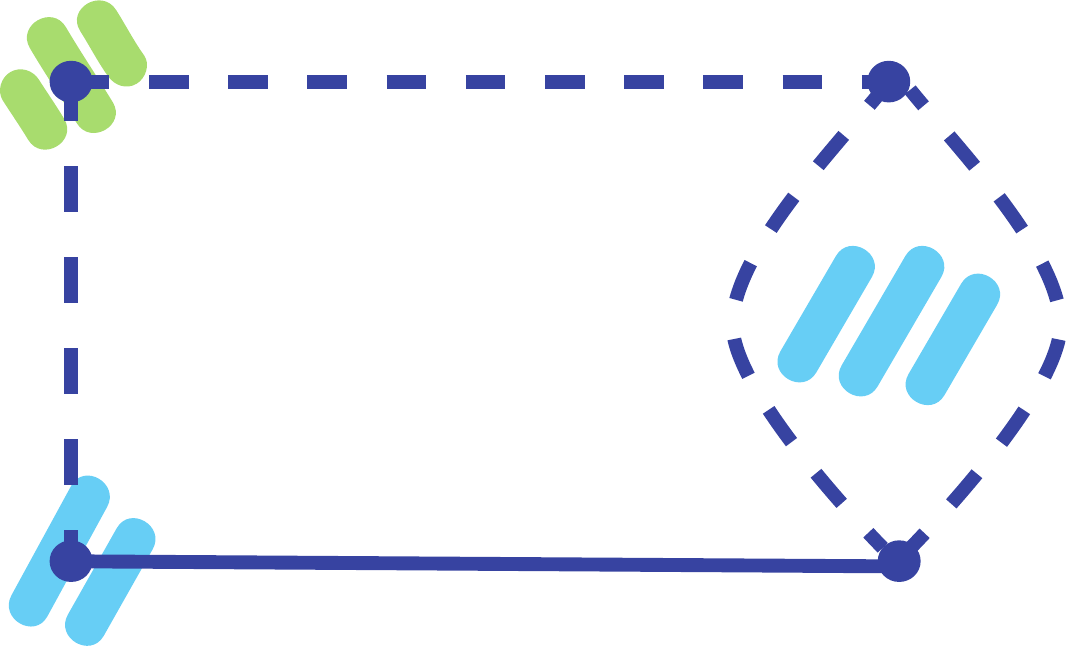} &&\begin{tabular}{rrl} 
+& $R_6=$& $0 [0\text{ } 1\text{ } 0\text{ } 0\text{ } 1]$\\ \hline 
&$V_3 = $& \hspace{6pt}$[0\text{ } 0\text{ } 0\text{ } 0\text{ } 1] $\end{tabular}\\
Move 4,  {\bf D} &\includegraphics[scale=.23]{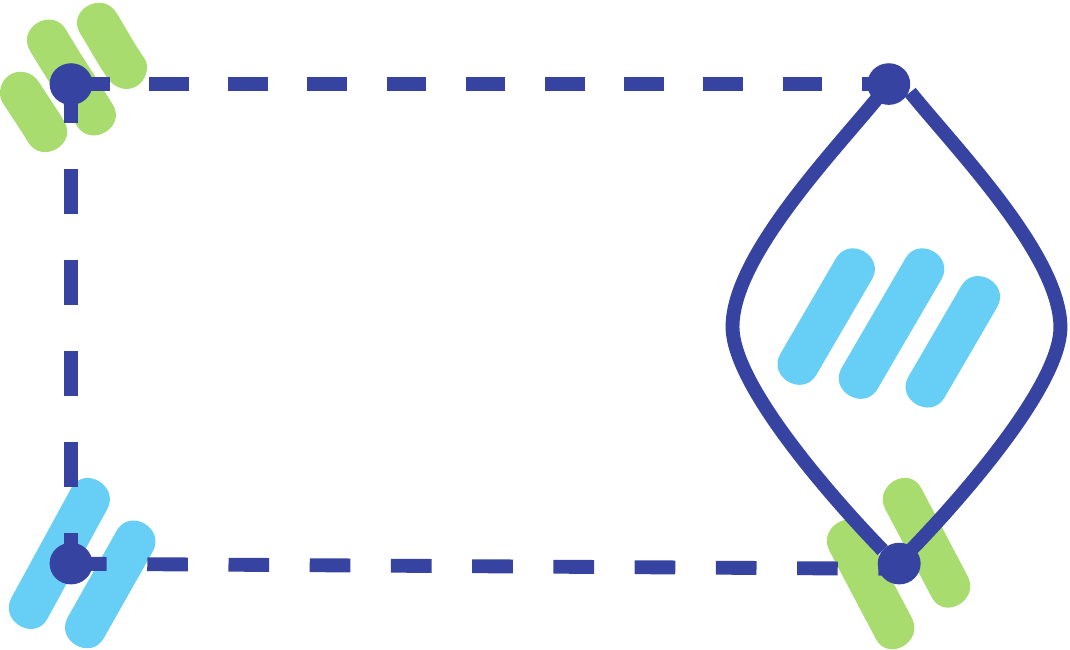}& &\begin{tabular}{rrl} 
+& $R_7=$& $1 [0\text{ } 0\text{ } 1\text{ } 1\text{ } 1]$\\ \hline 
&$V_4 = $& \hspace{6pt}$[0\text{ } 0\text{ } 1\text{ } 1\text{ } 0] $\end{tabular}\\
Move 5, {\bf C}&\includegraphics[scale=.23]{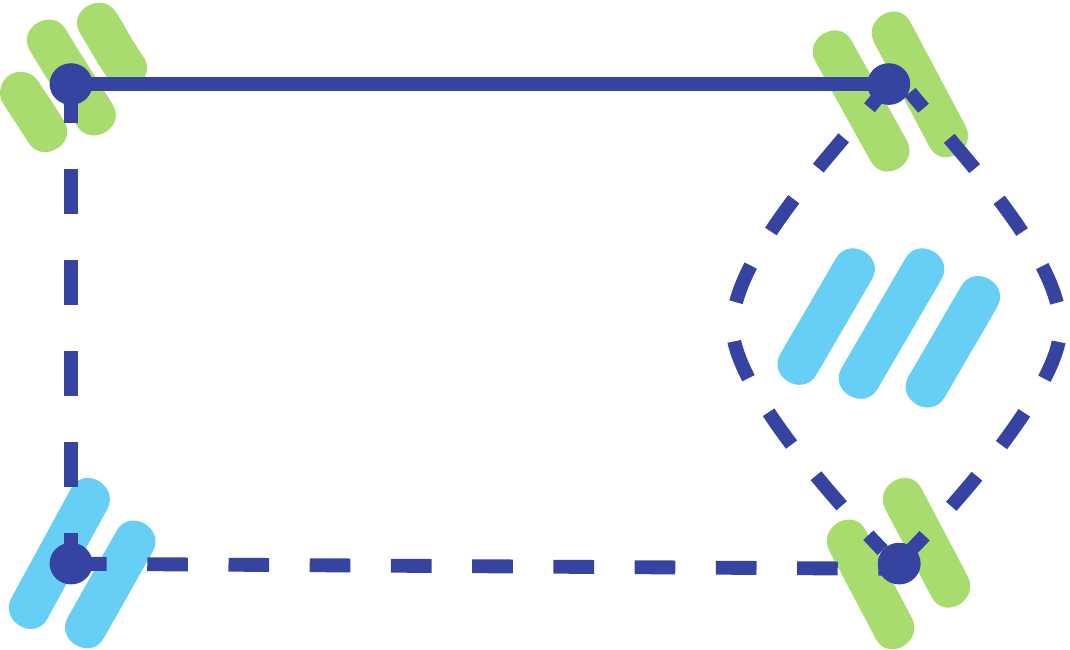}& &\begin{tabular}{rrl} 
+& $R_3=$& $1 [1\text{ } 0\text{ } 1\text{ } 1\text{ } 0]$\\ \hline 
&$V_5 = $& \hspace{6pt}$[1\text{ } 0\text{ } 0\text{ } 0\text{ } 0] $\end{tabular}\\
Move 6,  {\bf D}&\includegraphics[scale=.23]{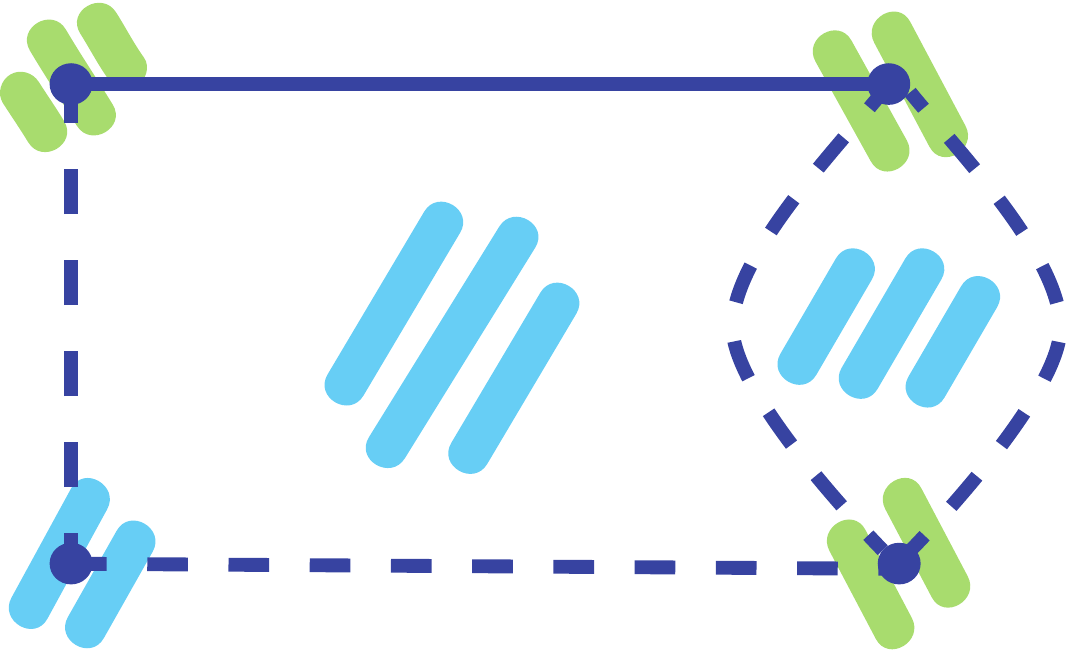}& &\begin{tabular}{rrl} 
+& $R_4=$& $0 [1\text{ } 1\text{ } 1\text{ } 0\text{ } 1]$\\ \hline 
&$V_6 = $& \hspace{6pt}$[1\text{ } 0\text{ } 0\text{ } 0\text{ } 0] $\end{tabular}\\
Move 7, {\bf C}&\includegraphics[scale=.23]{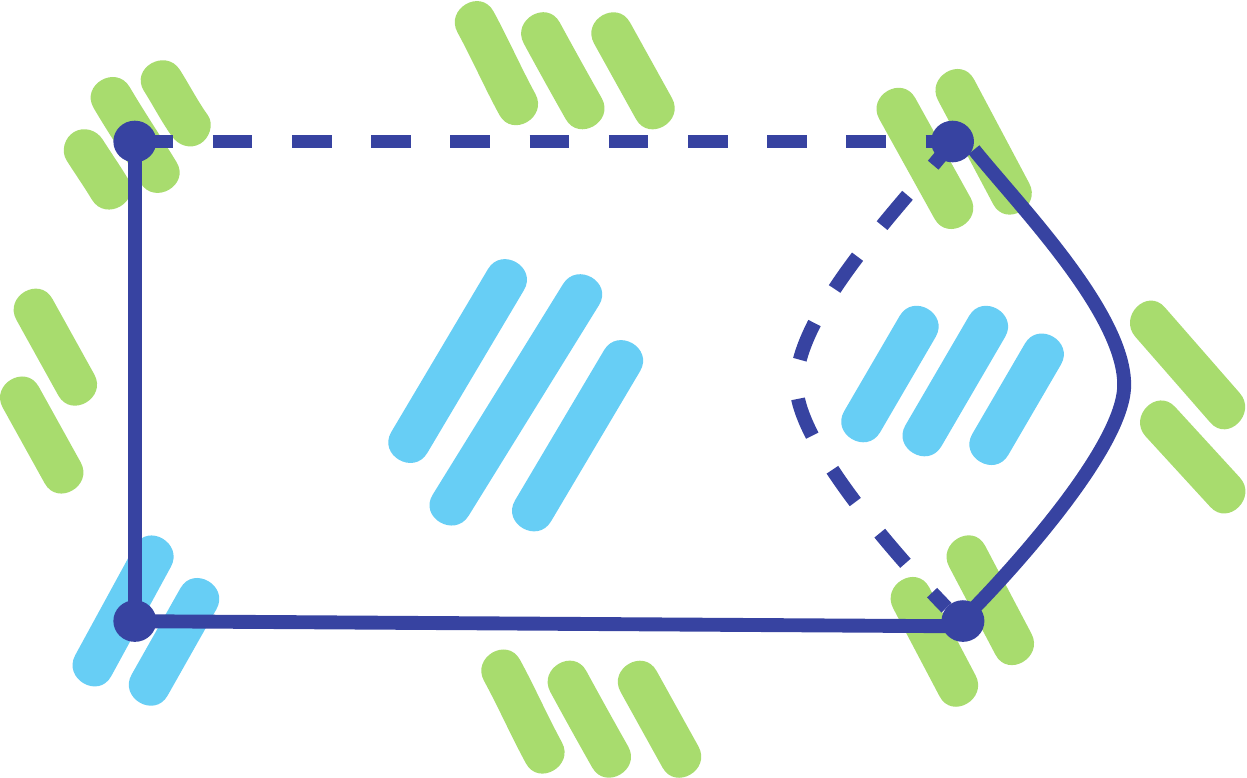}&& \begin{tabular}{rrl} 
+& $R_1=$& $1 [1\text{ } 1\text{ } 0\text{ } 1\text{ } 1]$\\ \hline 
&$V_7 = $& \hspace{6pt}$[0\text{ } 1\text{ } 0\text{ } 1\text{ } 1] $\end{tabular}
\end{tabular}
\caption{{\bf The graph version of the game shown in Figure~\ref{gameplay} with vector notation. C plays first and wins.
}}
\label{VectorGameplay}\end{center}
\end{figure}

By the commutativity of vector addition, the example in Figure~\ref{VectorGameplay} shows that the entire game play can be captured by the following matrix equation.

\begin{align*}
V_7&=V_0 + R_1 + R_2 + \dots + R_7 \\
&=V_0 + \epsilon_1 r_1 + \epsilon_2 r_2 + \dots + \epsilon_7 r_7\\
&=V_0 + [\epsilon_1 \hspace{0.1in}  \epsilon_2  \hspace{0.1in} \dots \hspace{0.1in} \epsilon_7  ]  \left[ \begin{tabular}{c}  $r_1 $\\  $r_2$  \\ \vdots \\  $r_7 $ \end{tabular} \right] 
\end{align*}

In general, given a connected state vector $V_0$, our goal is to strategically pick, or as we shall see {\it  pair}, the values $\epsilon_i$ for $1\leq i \leq k$ so that the resulting state vector $V_k$ from Equation \ref{eq} represents a connected diagram. 

\begin{equation}\label{eq} V_k=V_0 + [\epsilon_1 \hspace{0.1in}  \epsilon_2  \hspace{0.1in} \dots \hspace{0.1in} \epsilon_k  ]  \left[ \begin{tabular}{c}  $r_1 $\\  $r_2$  \\ \vdots \\  $r_k $ \end{tabular} \right]
\end{equation}

Using this formalism, we can determine a general winning strategy for Player C for the example in Figure~\ref{VectorGameplay}. Player C moves on $r_5$ first, keeping the region as-is. The remaining moves are determined by pairing moves on the following regions/vertices: $r_1$ and $r_4$,  $r_2$ and $r_6$, and $r_3$ and $r_7$.  In each pair, for any move made by D, there is a corresponding response move by C. In particular, Player C should respond to D's moves by choosing $\epsilon_1 \neq \epsilon _4$,  $\epsilon_2 \neq \epsilon _6$, and $\epsilon_3 = \epsilon _7$. Regardless of which vertices/regions Player D decides to move on and which signs for $\epsilon_i$ he chooses, C's response guarantees a connected diagram will result. Why is this? Consider the following matrix representing our game board.

\begin{gather}\label{r-matrix}
R=\begin{bmatrix}
r_1\\
r_2\\
r_3\\
r_4\\
r_5\\
r_6\\
r_7\\
\end{bmatrix}
=
\begin{bmatrix} 
1&1&0&1&1\\ 
 1&1&0&0&0\\
 1&0&1&1&0\\
 1&1&1&0&1\\
 0&0&1&1&0\\
 0&1&0&0&1\\
 0&0&1&1&1
\end{bmatrix}
\end{gather}

If Player C follows the strategy outlined above, the following are the eight possibilities for the vector $E=[\epsilon_1 \hspace{0.1in}  \epsilon_2  \hspace{0.1in} \dots \hspace{0.1in} \epsilon_7  ] $. The vectors represent the game play choices for Player D, with Player C's strategy entirely determined by D's choices. Note that the first vector in the list represents the game play shown in Figure \ref{VectorGameplay}.

\begin{gather*}
\begin{bmatrix} 1 & 1 & 1 & 0 & 0 & 0 & 1\end{bmatrix}\\
\begin{bmatrix} 1 & 1 & 0 & 0 & 0 & 0 & 0\end{bmatrix}\\
\begin{bmatrix} 1 & 0 & 1 & 0 & 0 & 1 & 1\end{bmatrix}\\
\begin{bmatrix} 1 & 0 & 0 & 0 & 0 & 1 & 0\end{bmatrix}\\
\begin{bmatrix} 0 & 1 & 1 & 1 & 0 & 0 & 1\end{bmatrix}\\
\begin{bmatrix} 0 & 1 & 0 & 1 & 0 & 0 & 0\end{bmatrix}\\
\begin{bmatrix} 0 & 0 & 1 & 1 & 0 & 1 & 1\end{bmatrix}\\
\begin{bmatrix} 0 & 0 & 0 & 1 & 0 & 1 & 0\end{bmatrix}\\
\end{gather*}

Multiplying each of these $E$ vectors by the $R$ matrix in Equation \ref{r-matrix} and adding the vector $V_0 =[1 \hspace{0.1in}  1  \hspace{0.1in} 0 \hspace{0.1in} 0 \hspace{0.1in} 1 ] $ gives the following eight vectors, the resulting values for $V_7$ that represent the states of the final game board.

\begin{gather*}
\begin{bmatrix} 0 & 1 & 0 & 1 & 1\end{bmatrix}\\
\begin{bmatrix} 1 & 1 & 0 & 1 & 0\end{bmatrix}\\
\begin{bmatrix} 1 & 1 & 0 & 1 & 0\end{bmatrix}\\
\begin{bmatrix} 0 & 1 & 0 & 1 & 1\end{bmatrix}\\
\begin{bmatrix} 0 & 1 & 1 & 0 & 1\end{bmatrix}\\
\begin{bmatrix} 1 & 1 & 1 & 0 & 0\end{bmatrix}\\
\begin{bmatrix} 1 & 1 & 1 & 0 & 0\end{bmatrix}\\
\begin{bmatrix} 0 & 1 & 1 & 0 & 1\end{bmatrix}
\end{gather*}

As we can see, there are only four distinct ending game states that are possible if D has freedom to choose his own moves and C follows her strategy. The corresponding graphs are shown in Figure \ref{endgame}. Note that the subgraphs defined by the edges that are turned on are all spanning trees in the checkerboard graph. Thus, the strategy we described is indeed a winning strategy.

\begin{figure}[htbp] \begin{center}
\includegraphics[scale=.25]{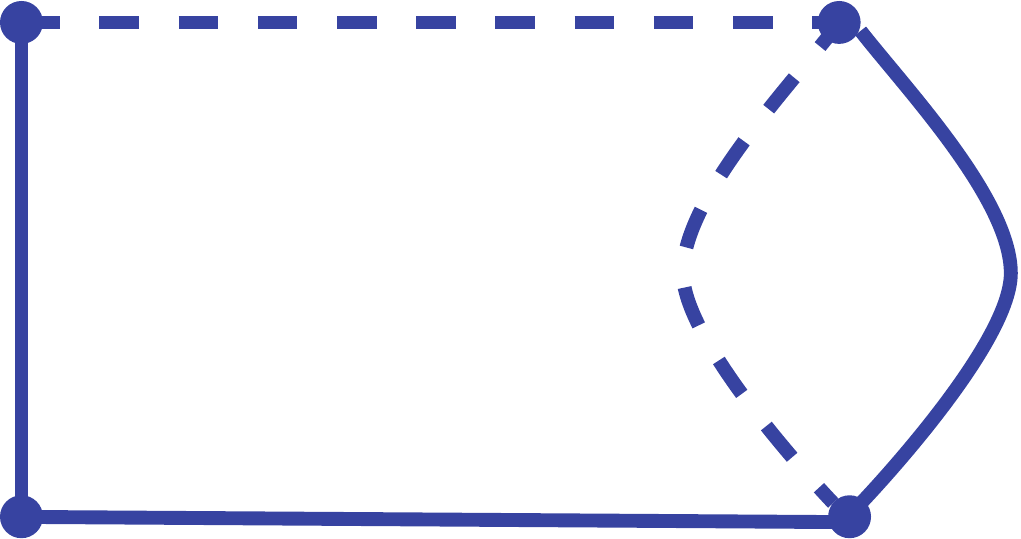}\hspace{1in}
\includegraphics[scale=.25]{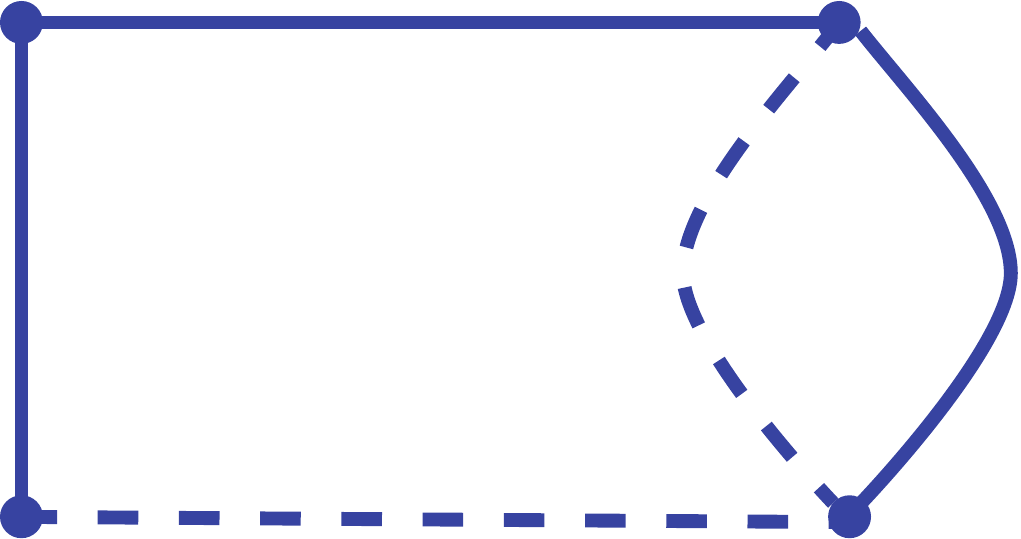}\\
$[0 \hspace{0.1in}  1  \hspace{0.1in} 0 \hspace{0.1in} 1  \hspace{0.1in} 1]$ \hspace{1.2in} $[1 \hspace{0.1in}  1  \hspace{0.1in} 0 \hspace{0.1in} 1  \hspace{0.1in} 0]$\\
\bigskip

\includegraphics[scale=.25]{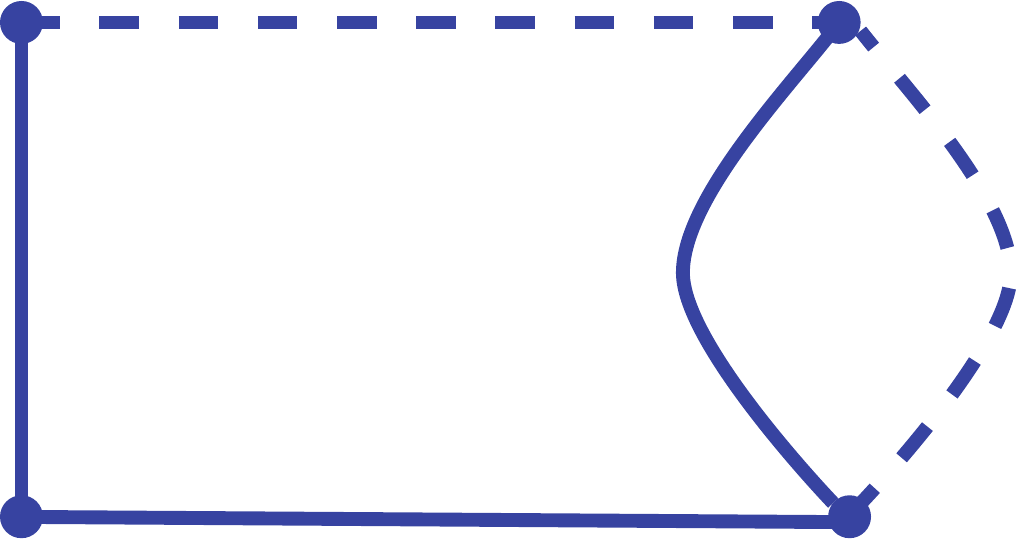}\hspace{1in}
\includegraphics[scale=.25]{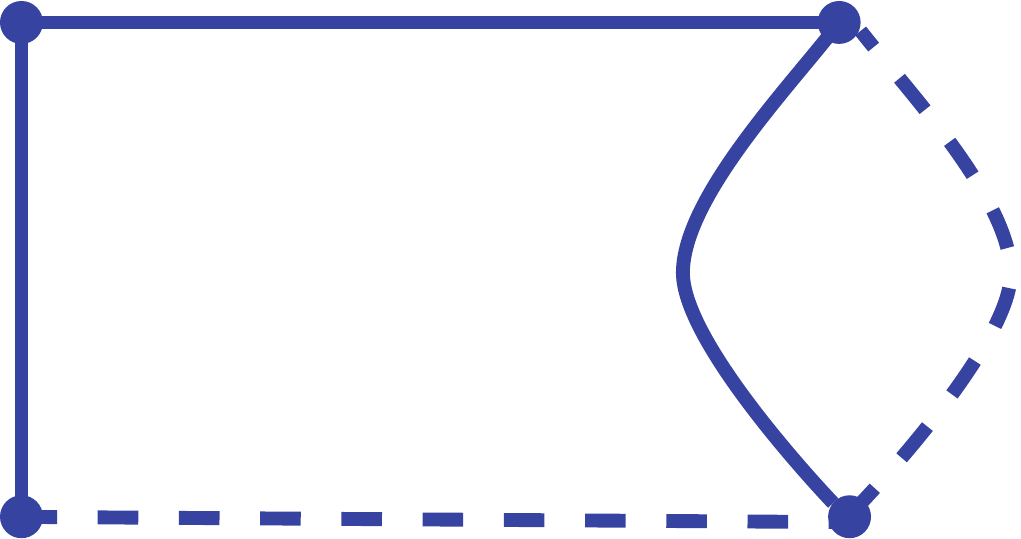}\\
$[0 \hspace{0.1in}  1  \hspace{0.1in} 1 \hspace{0.1in} 0  \hspace{0.1in} 1]$ \hspace{1.2in} $[1 \hspace{0.1in}  1  \hspace{0.1in} 1 \hspace{0.1in} 0  \hspace{0.1in} 0]$\\
\end{center}
\caption{{\bf The possible ending game boards when C follows her winning strategy on the graph in Figure \ref{Board2Graph}.
}}\label{endgame}
\end{figure}

\begin{figure}[htbp] \begin{center}
 \begin{tabular}{ccc}
(a)  \includegraphics[scale=.3]{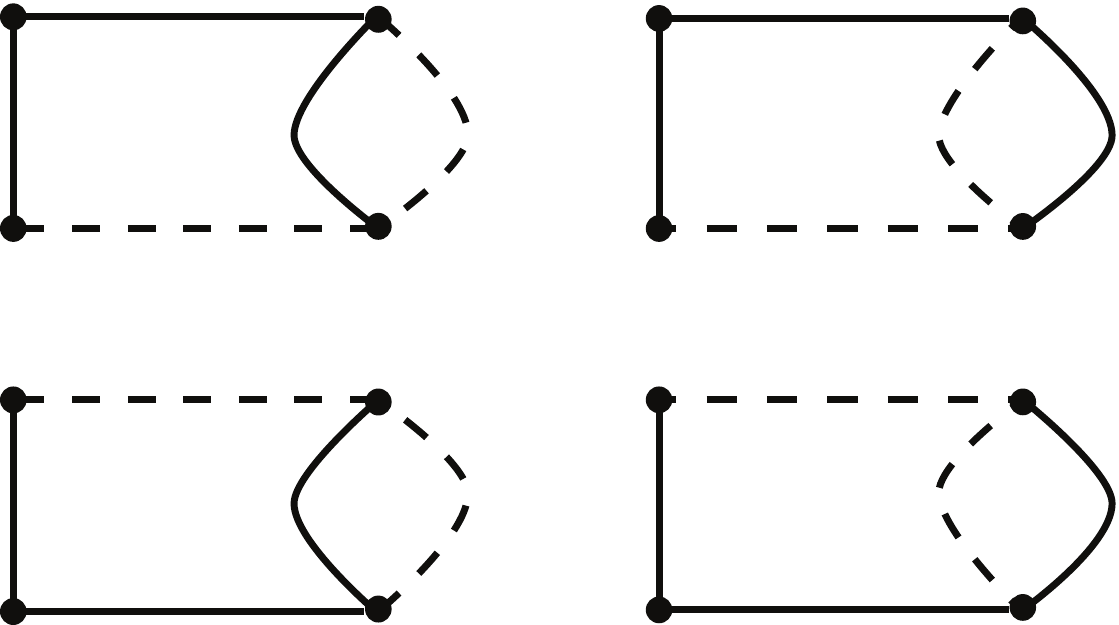} & (b) \includegraphics[scale=.3]{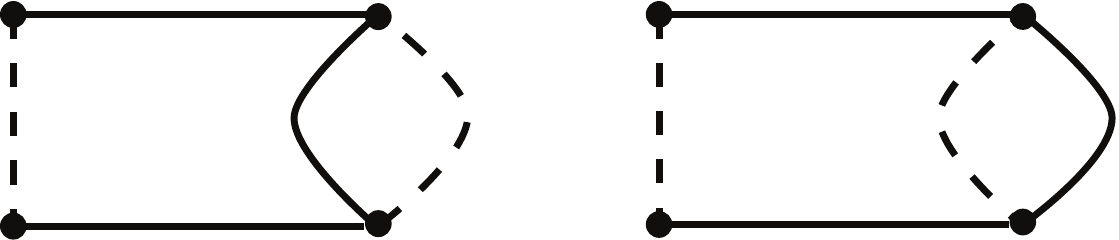}&  (c) \includegraphics[scale=.3]{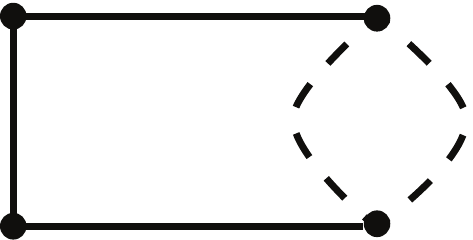}
\end{tabular}
\caption{{\bf All connected starting game boards on a 5-crossing twist knot shadow, grouped according to winning strategy. 
}}
\label{ConnectedBoards}\end{center}
\end{figure}

As we can see in Figure~\ref{exampleGameBoard}, there are many possible starting game boards that come from the twist knot.  In Figure~\ref{ConnectedBoards} we list {\it all} possible starting game graphs and group them according to their winning pairing strategy for C.  We will see that the pairs of moves described above remain quite useful on any starting game board.  In fact, the move pairing strategy can be used to prove the following theorem.

\begin{proposition} If Player C moves first, then Player C has a winning strategy playing the Region Smoothing Swap Game on any connected game board associated to the 5-crossing twist knot shadow in Figure~\ref{exampleGameBoard}.
\end{proposition}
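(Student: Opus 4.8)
The plan is to recast everything in the checkerboard graph $G$ of the twist-knot shadow, which has $4$ vertices, $5$ edges, and $3$ faces, so that by Euler's formula $k = 4+3 = 7$ and every connected smoothed state corresponds to a weight-$3$ spanning-tree indicator vector in $\mathbb{F}_2^5$. Recall that a vertex move is the toggle of all edges incident to a vertex and a face move is the toggle of all edges bordering a face; in either case the move is one of the fixed rows $r_i$ of \eqref{r-matrix}, and the player on turn only selects the coefficient $\epsilon_i \in \{0,1\}$. Since $k=7$ and C moves first, C plays on moves $1,3,5,7$ and D plays on moves $2,4,6$. The goal is to show that for \emph{every} connected starting vector $V_0$, C can steer the displacement equation \eqref{eq} so that the final vector $V_7$ is again a spanning tree.

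First I would formalize C's strategy abstractly. C designates one region to play on move $1$, fixing its coefficient (typically to $0$), and partitions the remaining six regions into three pairs, attaching to each pair a relation, either $\epsilon_i = \epsilon_j$ or $\epsilon_i \neq \epsilon_j$. Whenever D plays a region from some pair, C responds on that pair's partner, choosing her coefficient to enforce the prescribed relation. This is a legal strategy: after C's first move the turn order is D, C, D, C, D, C, and since D must open a previously untouched pair on each of moves $2,4,6$, C can always close it on the move that follows. As in the worked case around \eqref{r-matrix}, the set of final states reachable under such a strategy is then an affine ``cube'' $B + \{0,d_1\} + \{0,d_2\} + \{0,d_3\}$, where $B$ is a fixed base vector and each direction is a pair difference $d_m = r_{i_m} + r_{j_m}$ (for a ``$\neq$'' pair the contribution $\{r_{i_m}, r_{j_m}\}$ is $r_{i_m} + \{0, d_m\}$, with the $r_{i_m}$ absorbed into $B$); D's three binary choices index the at most eight vertices of this cube.

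Next I would reduce the proposition to a finite check. The connected starting boards are exactly the spanning trees of $G$, and Figure~\ref{ConnectedBoards} sorts them into three classes according to which pairing strategy succeeds. For each connected board I would record the first move and the pairing dictated by its class, generalizing the choice (play $r_5$, then pair $(r_1,r_4)$, $(r_2,r_6)$, $(r_3,r_7)$) used for $V_0 = [1\,1\,0\,0\,1]$, and then verify that every vertex of the associated affine cube is a spanning-tree indicator. In the worked case two directions coincide, $r_2+r_6 = r_3+r_7 = [1\,0\,0\,0\,1]$, so the cube collapses to the four distinct states checked in Figure~\ref{endgame}; I expect the same collapse in the remaining classes, which keeps each verification to at most four states.

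The main obstacle is precisely the selection of the pairings: one must choose, for each connected board, a first region and three pairs so that \emph{every} vertex of the resulting cube lands in the spanning-tree set. Equivalently, writing $B$ for the base spanning tree, the directions $d_m$ must be simultaneously valid tree-exchange moves, so that $B + \sum_{m \in S} d_m$ is a spanning tree for every $S \subseteq \{1,2,3\}$, not merely for single exchanges. Exhibiting such a pairing for every connected board and dispatching the finitely many resulting cubes is the heart of the argument; by contrast, the legality and bookkeeping of the pairing response are routine, and the first-move parity (C's extra move on the unpaired region) is exactly what makes the three-pair decomposition available to the first player.
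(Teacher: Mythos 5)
Your framework coincides with the paper's: C opens on $r_5$ keeping it as-is, splits the remaining six regions into the pairs $(r_1,r_4)$, $(r_2,r_6)$, $(r_3,r_7)$, enforces an equality or inequality on each pair, and the reachable final states form the affine set $B+\{0,d_1\}+\{0,d_2\}+\{0,d_3\}$ with $d_m$ the pair sums. Your legality bookkeeping (D must open an untouched pair on each of his moves, C immediately closes it) and your observation that $r_2+r_6=r_3+r_7=[1\ 0\ 0\ 0\ 1]$, so the cube collapses to at most four states, are both correct and match the paper's explicit computation for the board of type (c). But the proposal stops exactly where the proof has to begin: you never exhibit, for each connected starting board, \emph{which} relations ($=$ or $\neq$) on the three pairs make every vertex of the cube a spanning tree, and you never carry out the finite verification --- you explicitly defer both, calling them ``the heart of the argument.'' Naming the heart does not discharge it. As written, the proposal establishes the proposition only for the single board $V_0=[1\ 1\ 0\ 0\ 1]$ already worked out in the text preceding the proposition, and promises the rest.

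For comparison, the paper completes this step with structure rather than board-by-board cube checks. It groups the connected boards into three classes (a), (b), (c) as in Figure~\ref{ConnectedBoards} and assigns: to (a) the relations $\epsilon_1=\epsilon_4$, $\epsilon_2=\epsilon_6$, $\epsilon_3=\epsilon_7$; to (b) the same except $\epsilon_2\neq\epsilon_6$; to (c) the relations $\epsilon_1\neq\epsilon_4$, $\epsilon_2\neq\epsilon_6$, $\epsilon_3=\epsilon_7$. The verification is then organized by closure and reduction: class (a) is closed under each completed pair of moves, so a board of type (a) can be defended indefinitely; and, using the same order-independence you invoke (everything is commutative addition over $\mathbb{F}_2$), a class (b) board becomes a class (a) board once the $\epsilon_2\neq\epsilon_6$ pair is accounted for, while a class (c) board becomes class (a) after both $\neq$ pairs, whereupon the class (a) closure finishes the argument. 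To complete your write-up along your own lines you would need, at minimum, to enumerate the connected boards, attach the relations above to each, and check the at most four cube vertices in every case; the paper's closure-and-reduction organization accomplishes this with strictly less case work.
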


\begin{proof}
All connected starting game boards are shown in Figure~\ref{ConnectedBoards} and placed in one of three groups labeled (a), (b), and (c).  For each  group of starting game boards, we describe a winning strategy for C, assuming she moves first. We use the names of regions, $r_i$, and crossings, $c_j$, as given in Figure~\ref{VectorGameplay}. In all groups, Player C's first move is on $r_5$ and she keeps the region as-is.  

For any starting board in group (a), we observe the following pairing strategy results in a win for C: $\epsilon_1 = \epsilon _4$,  $\epsilon_2 = \epsilon _6$, and $\epsilon_3 = \epsilon _7$.  In fact, after any single pair of moves $\epsilon_1 = \epsilon _4$,  $\epsilon_2 = \epsilon _6$, or $\epsilon_3 = \epsilon _7$, the mimicking strategy results in another game board within the list of Figure~\ref{ConnectedBoards}(a).  Thus, player C can defend the boards in group (a) so that after each move by D and corresponding reply by C, the board returns to a connected game board from group (a). 

For any starting board in group (b), we observe the following pairing strategy results in a win for C: $\epsilon_1 = \epsilon _4$,  $\epsilon_2 \neq \epsilon _6$, and $\epsilon_3 = \epsilon _7$.  Recall that, for any given set of moves, the order in which they are performed doesn't affect the outcome of the game. So, without loss of generality, we suppose the pair of moves  $\epsilon_2 \neq \epsilon _6$ are made first.  After these two moves are made on a game board from (b), the result is a game board from (a).  Then the remaining pairs of moves  $\epsilon_1 = \epsilon _4$ and $\epsilon_3 = \epsilon _7$ are applied to a game board from group (a), resulting in final board in group (a).

Lastly, as we saw in the more detailed linear algebraic argument above for the game board (c), the following pairing strategy results in a win for C: $\epsilon_1 \neq \epsilon _4$,  $\epsilon_2 \neq \epsilon _6$, and $\epsilon_3 = \epsilon _7$.  This pairing works regardless of the order in which the paired moves are applied.  When playing on the board in group (c), we note that if both pairs of moves $\epsilon_1 \neq \epsilon _4$ and $\epsilon_2 \neq \epsilon _6$ are completed first, then the result is a connected game board from (a).  The final pair of moves, $\epsilon_3 = \epsilon _7$, completes the game, producing a game board from (a).  
\end{proof}

We observe the special role played by the game boards in group (a) of Figure~\ref{ConnectedBoards}.  Once the starting board graph was moved into a board of type (a) by Player C, then C had a winning defensive strategy.  

\section{Game Play on the Sphere}

Three nice examples of link shadows on the surface of a sphere on which Player C has a winning strategy come from the minimum crossing diagrams of the figure-eight knot, the trefoil, and the Borromean rings.  The figure-eight knot in particular gives a nice example of a game that Player C can win using a mimicking strategy.

\begin{figure}[htbp] \begin{center}
 \begin{tabular}{ccc}
 \includegraphics[height=1.5in]{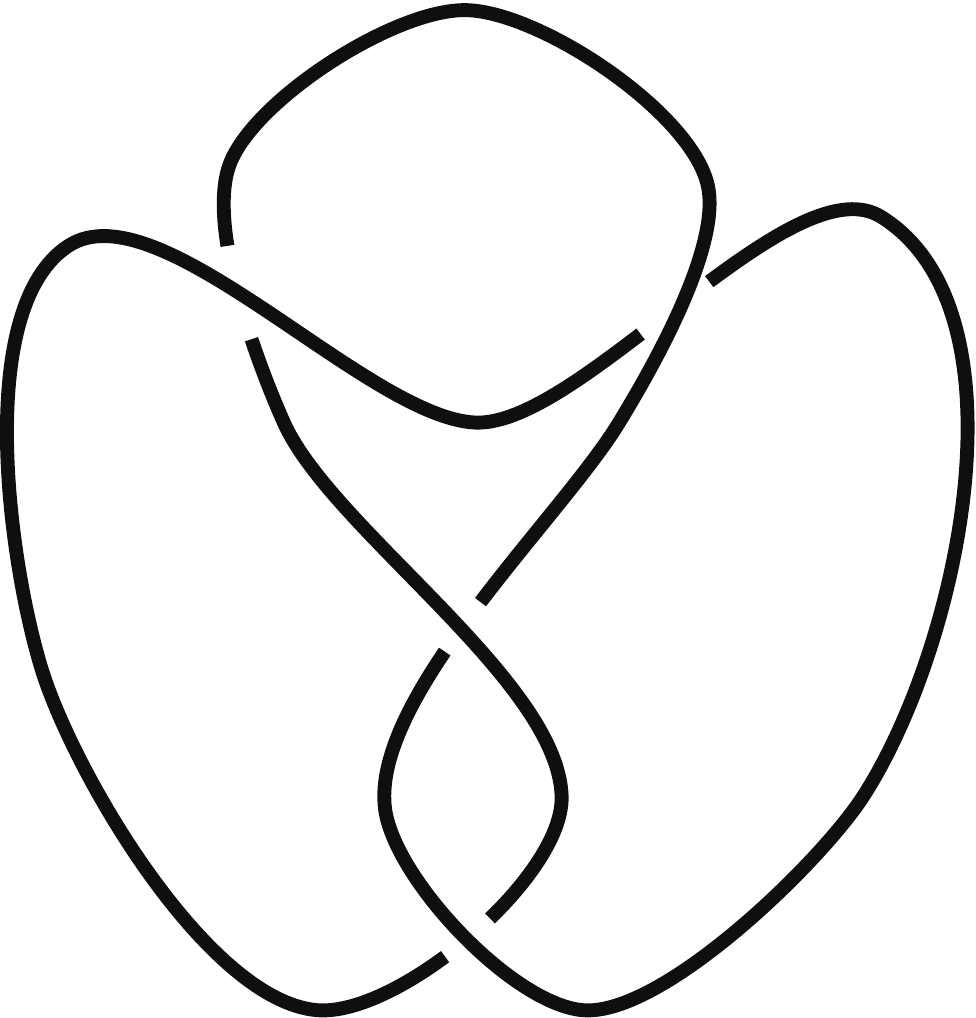} & & \includegraphics[height=1.5in]{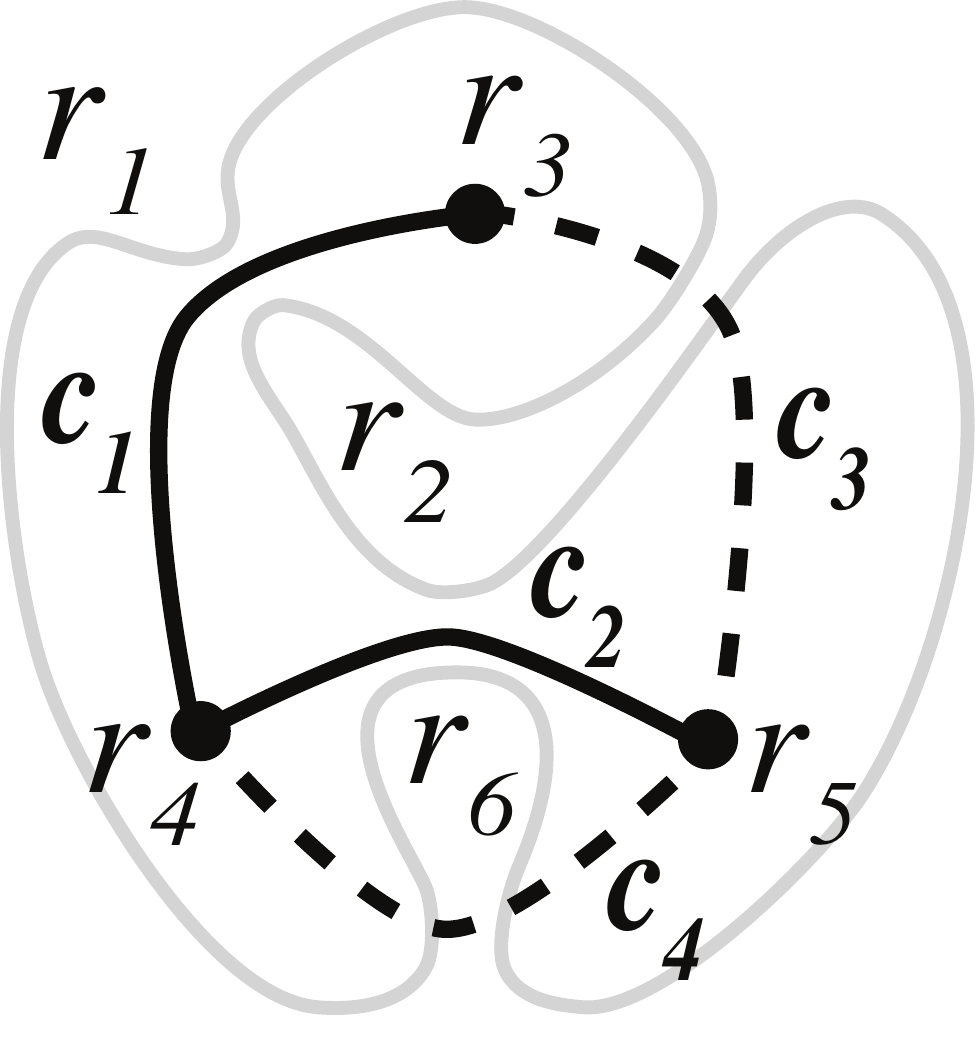}
\end{tabular}
\caption{\textbf{The figure-eight knot and a corresponding game board.}}
\label{fig8Game}\end{center}
\end{figure}

\begin{proposition} If Player C moves second, then Player C has a winning strategy playing the Region Smoothing Swap Game on any connected game board associated to the figure-eight knot shadow, as seen in Figure~\ref{fig8Game}.
\end{proposition}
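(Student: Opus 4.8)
The plan is to recast the game in the graph/vector model of Section~\ref{graphs} and to exploit that Player C moves second. The figure-eight shadow has $n=4$ crossings, so on the sphere there are $k=n+2=6$ regions; since C moves second and $k$ is even, D and C each make exactly three moves and D must open every turn. This is precisely the situation in which C can run a pure pairing (mimicking) strategy: partition the six regions into three pairs and, each time D selects a region, respond on its partner. Because D always opens a previously untouched pair, C can always reply, so the strategy is well defined, and by the commutativity used in Equation~\eqref{eq} only the final state $V_6$, not the order of play, matters. The analysis therefore reduces to controlling the set of states $V_6$ that D can force.

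Next I would record the algebra of a single pair. If C pairs regions $r_a,r_b$ under the rule $\epsilon_a=\epsilon_b$, the net contribution of that pair to $V_6$ is either $0$ or $r_a+r_b$, with D choosing which; under the rule $\epsilon_a\neq\epsilon_b$ it is $r_a$ or $r_b$. In both cases the two options differ by $s:=r_a+r_b$, so once the pairing is fixed the reachable final states form a single coset $V_0+(\mathrm{shift})+\operatorname{span}\{s_1,s_2,s_3\}$, where the shift is determined by C's choice of $=/\neq$ on each pair while D controls the position within the coset. The key structural fact is that $s_1+s_2+s_3=0$, since the sum of all six region vectors vanishes mod $2$ (every edge meets two vertices and borders two faces); hence the span is at most two-dimensional and the coset has at most four elements, every one of which C must arrange to be a spanning tree.

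The heart of the argument is then a finite verification on the figure-eight checkerboard graph, which is a triangle with one doubled edge (three vertices, four edges) and thus has exactly five spanning trees, equivalently the determinant of the figure-eight is $5$; these are the five connected starting boards. I would exhibit one explicit pairing---the two non-bigon faces together, the two endpoints of the doubled edge together, and the remaining vertex with the bigon face---for which $\operatorname{span}\{s_1,s_2,s_3\}$ equals the particular two-dimensional subspace $U$ whose unique coset contained entirely in the tree set is the collection of the four spanning trees that use exactly one of the two parallel edges. I would then check that the two pairs not contained in $U$ contribute independent classes in $\mathbb{F}_2^4/U$, so that ranging over the $=/\neq$ rules realizes all four cosets of $U$. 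Consequently, for each of the five starting boards $V_0$, C can pick rules placing the reachable coset exactly onto those four trees; the same pairing works for all boards, and only the rule assignment depends on $V_0$. Since every forced final state is then a spanning tree, C wins from any connected board.

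I expect the main obstacle to be pinning down this coset geometry: confirming that the chosen pairing forces the span to be exactly $U$, that $U$ admits a full coset of spanning trees and identifying it as the four mixed trees, and that the $=/\neq$ choices sweep out all cosets of $U$ so that a single pairing suffices for all five boards. A secondary point worth stating explicitly is that C's reply is adaptive---she reads D's region and sign before answering---so she can always realize the prescribed shift regardless of which element of a pair D opens or in what order the pairs are played.
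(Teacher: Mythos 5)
Your proposal is correct, and at its core it runs on the same mechanism as the paper's proof: you use the identical pairing of regions (the two endpoints of the doubled edge, the two non-bigon faces, and the remaining vertex with the bigon face --- in the paper's labels, $r_1$ with $r_2$, $r_3$ with $r_6$, and $r_4$ with $r_5$), together with the fact that the four spanning trees using exactly one of the two parallel edges form a single closed collection, namely a coset of $U=\operatorname{span}\{s_1,s_2,s_3\}$. The difference is that the paper only ever uses the pure mimicking rule $\epsilon_a=\epsilon_b$ on every pair and verifies closure of the four states drawn in Figure~\ref{fig8Game-states}, whereas you additionally allow anti-mimicking rules $\epsilon_a\neq\epsilon_b$ and observe that the resulting shifts sweep out all four cosets of $U$, since the classes of the two pairs whose region vectors do not lie in $U$ are independent in $\mathbb{F}_2^4/U$. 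This extra flexibility is not cosmetic: the figure-eight checkerboard graph has five spanning trees, and on the fifth one --- the tree consisting of the two non-parallel edges, whose state vector lies in $U$ itself --- pure mimicking fails (D swaps at one endpoint of the doubled edge, C's mimicking reply then erases the two remaining edges, producing the empty subgraph, after which D keeps everything as-is and wins). So the paper's argument, read literally, establishes the claim only for the four boards in its figure, while your coset argument covers all five connected starting boards, which is what the proposition actually asserts; the price is the finite linear-algebra verification you outline, and each of those checks does go through: the three pair-sums span exactly the two-dimensional subspace $U$, the four ``mixed'' trees are the unique coset of $U$ consisting entirely of spanning trees, and the $=$/$\neq$ rule choices on the two pairs outside $U$ realize all four cosets, so C can steer any connected $V_0$ onto the good coset.
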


\begin{proof}
For this proof, we refer to the labeling of regions/vertices by $r_i$ and crossings by $c_i$ shown in Figure~\ref{fig8Game}. The strategy Player C should follow to win is a mimicking strategy that pairs $r_1$ with $r_2$, $r_3$ with $r_6$, and $r_4$ with $r_5$. In other words, Player C should play so that $\epsilon_1=\epsilon_2$, $\epsilon_3=\epsilon_6$, and $\epsilon_4=\epsilon_5$. Following this strategy, given any game board from Figure~\ref{fig8Game-states}, any pair of moves yields another game board from this collection. For instance, the second graph is obtainable from the first in the figure (and vice versa) by changing both $r_1$ and $r_2$; the third graph is obtainable from the first by changing $r_4$ and $r_5$; the fourth graph is obtainable from the first by changing both $r_3$ and $r_6$. 

Since each member of the collection is a winning state for Player C, she can always win.
\end{proof}

\begin{figure}[htbp] \begin{center}
 \begin{tabular}{cccc}
 \includegraphics[height=.7in]{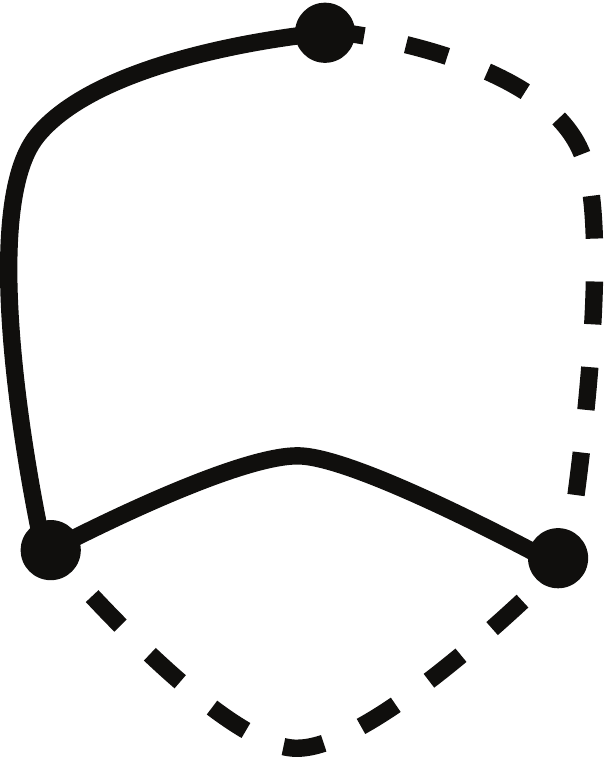} &  \includegraphics[height=.7in]{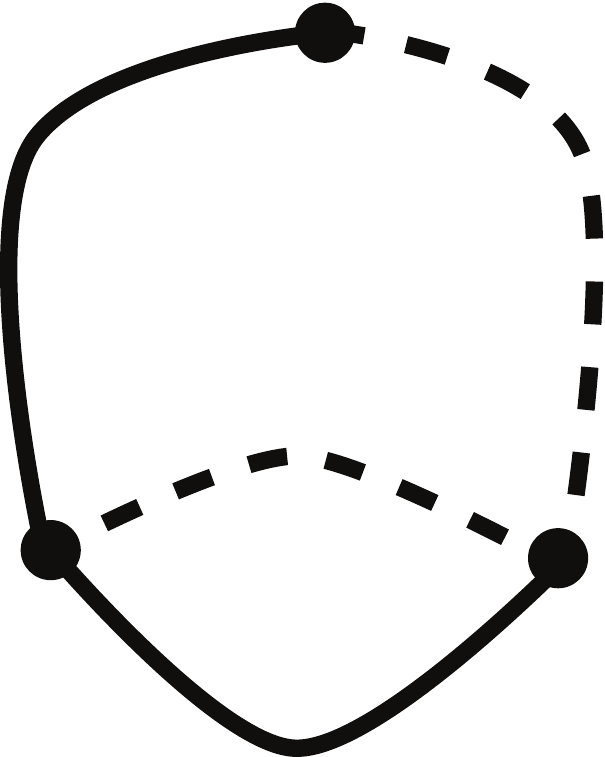}& \includegraphics[height=.7in]{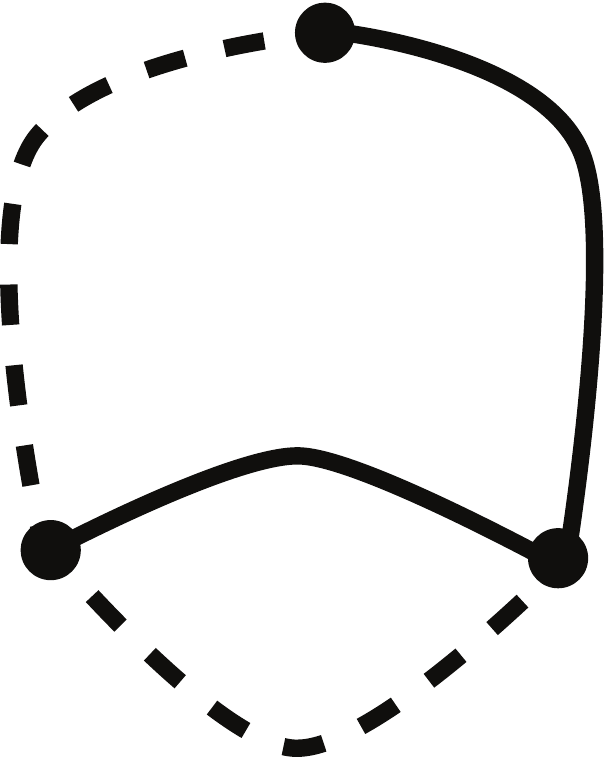} &  \includegraphics[height=.7in]{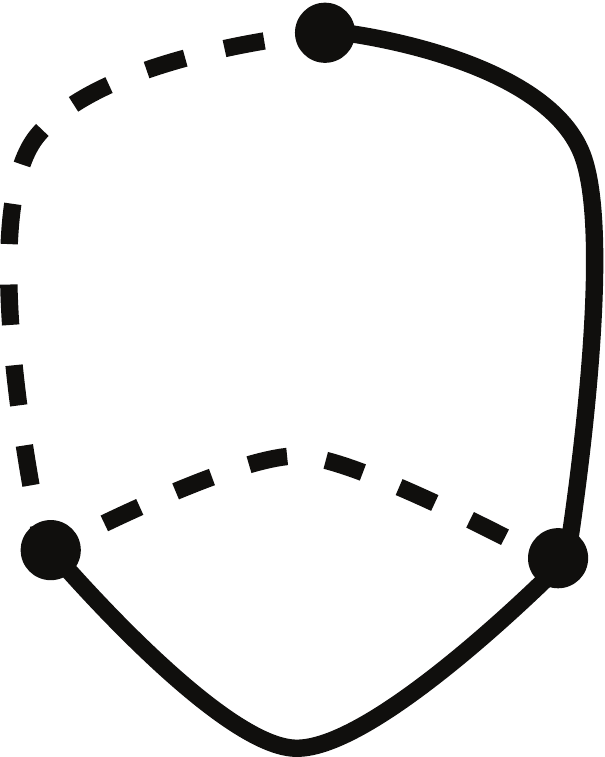}
\end{tabular}
\caption{{\bf Game states after Player C moves on the figure 8 knot.}}
\label{fig8Game-states}\end{center}
\end{figure}

\begin{figure}[htbp] \begin{center}
 \begin{tabular}{ccc}
\includegraphics[scale=.3]{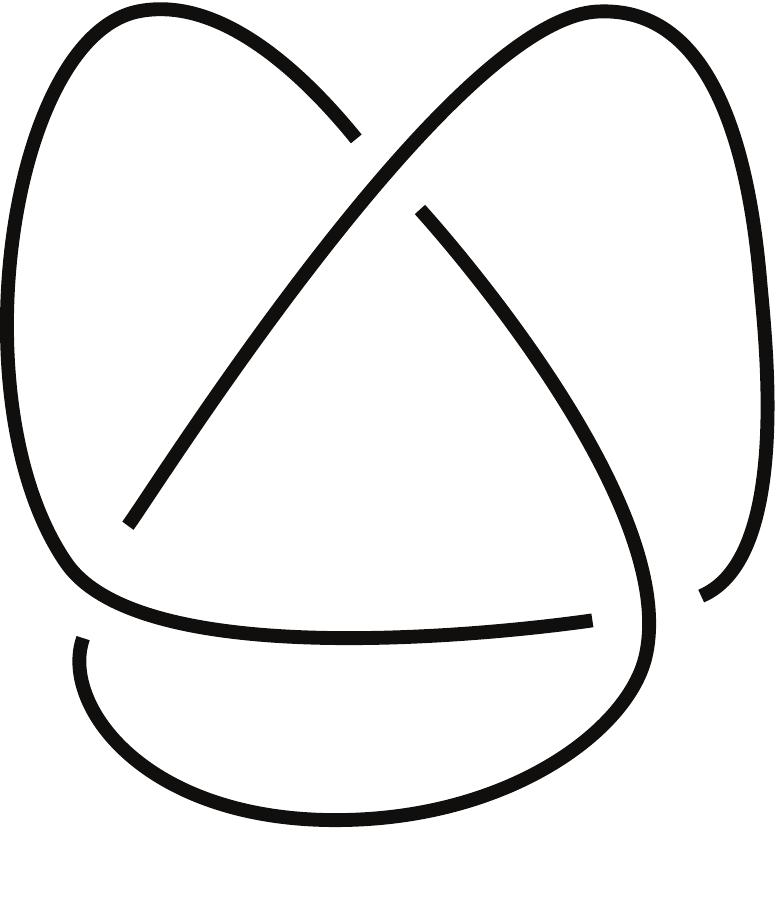}& &  \includegraphics[scale=.3]{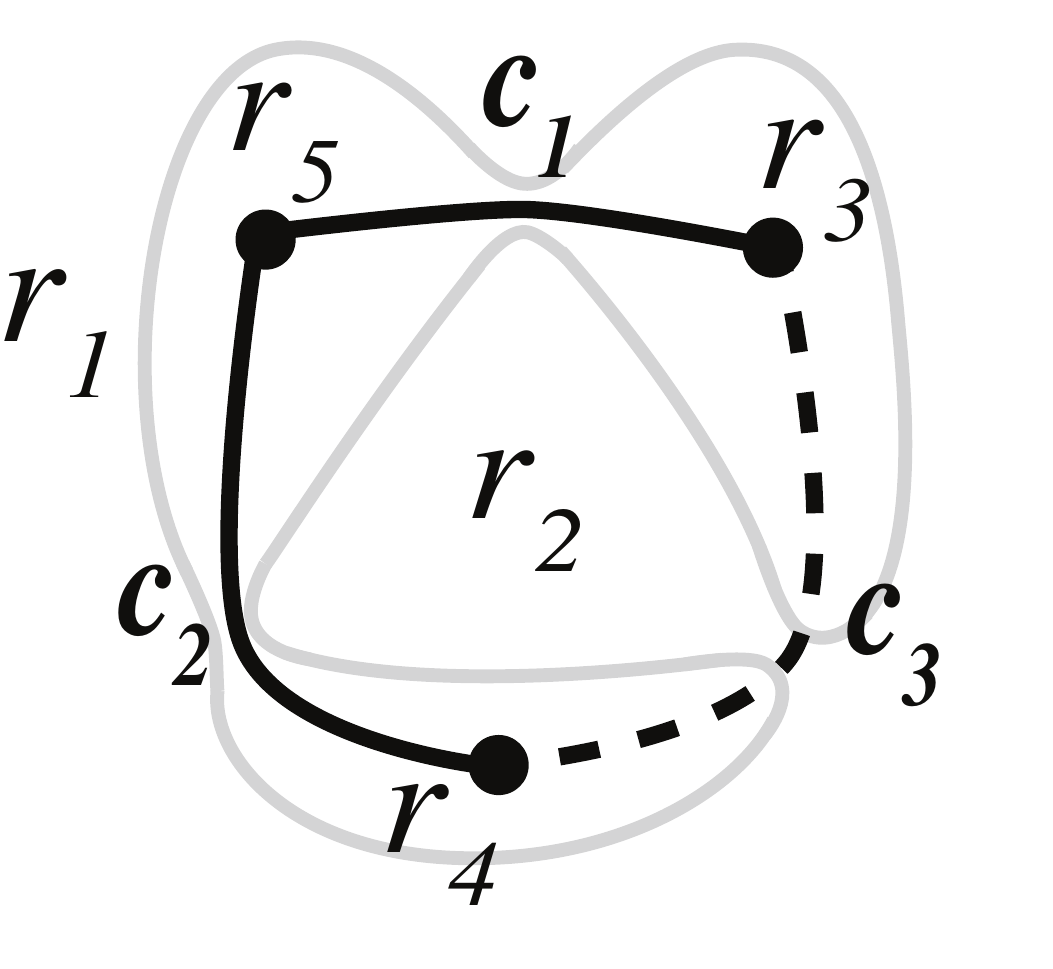} 
\end{tabular}
\caption{{\bf The trefoil and a corresponding game board.}}
\label{trefGame}\end{center}
\end{figure}

\begin{proposition} If Player C moves first, then Player C has a winning strategy playing the Region Smoothing Swap Game on any connected game board associated to the trefoil knot shadow in Figure~\ref{trefGame}.
\end{proposition}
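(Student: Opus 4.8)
The plan is to mirror the approach used for the previous two propositions and analyze the game on the checkerboard graph. Since the trefoil shadow has three crossings on the sphere, the relation $n+2=k$ gives $k=5$ regions, so there are five moves; as C plays first she makes three moves and D makes two. My first step is to record the region vectors $r_1,\dots,r_5$ coming from Figure~\ref{trefGame} together with the resulting $5\times 3$ matrix $R$. Whichever of the two checkerboard colorings is used, the graph is either two vertices joined by three edges or a triangle, and in both cases exactly two of the regions have boundary meeting all three crossings (so their vectors equal $[1\ 1\ 1]$) while the other three regions have weight-two vectors; I will also note the relation $\sum_i r_i = 0 \pmod 2$.

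The second step is to characterize the connected states. Since connectedness corresponds to the on-edges forming a spanning tree, and every spanning tree of a connected graph has the same number of edges, the connected states are exactly the three vectors of a fixed Hamming weight (weight one for the two-vertex graph, weight two for the triangle). I will check that the three weight-two region swaps act on these three connected states as transpositions, each swapping two of them and sending the third to a disconnected state, while each $[1\ 1\ 1]$ swap carries every connected state out of the winning set.

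With this in hand, the third step is to present C's pairing strategy. Her opening move keeps one of the weight-two regions as-is, preserving the initial connected board; she then pairs the two $[1\ 1\ 1]$ regions with each other and the two remaining weight-two regions with each other. The invariant C maintains is that after each of her moves the board is connected. For the $[1\ 1\ 1]$ pair this is immediate: since the two vectors coincide, C mimics D's choice ($\epsilon=\epsilon$) and the net effect is zero. The delicate pair is the two weight-two regions, and I expect this to be the main obstacle: a blind mimic fails, because from one of the three connected states copying D's swap produces a weight-three, disconnected state. The resolution is a short, state-dependent case check---exactly the finite verification carried out for the figure-eight proposition---showing that from each of the three connected states, and for either of D's two choices on the first region of the pair, C has a response on its partner that returns the board to a connected state.

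Finally, I will assemble these observations. Because C restores a connected board after each of her moves, the board is connected at the start of every round, so the case analysis for each pair always applies; and since D must open a fresh pair in each of the two rounds, C completes that pair and re-establishes connectivity. After C's last move the game ends on a connected board, so C wins. The only genuine work is the weight-two pair verification in the third step; everything else is bookkeeping with the vectors $r_i$ and the spanning-tree criterion.
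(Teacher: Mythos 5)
Your proposal is correct, and its skeleton is the same as the paper's: C opens by keeping one weight-two region as-is, pairs the two $[1\;1\;1]$ regions with pure mimicry (net effect zero mod 2), and pairs the two remaining weight-two regions, with D forced to open each pair and C closing it. The only divergence is in how the critical weight-two pair is handled. The paper first reduces to a single board (all connected boards are isomorphic) and has C open specifically on the region whose incidence vector equals the starting state (its ``vertex of degree 2''), after which the fixed, non-adaptive rule $\epsilon_3\neq\epsilon_4$ suffices: the net effect of that pair is then exactly one of the other two weight-two vectors, and either addition produces another spanning tree. You instead make C's reply in that pair state-dependent and defer to a finite case check; that check does succeed (from any of the three connected states, C's two possible replies differ by a nonzero weight-two vector, hence give two distinct states in the same Hamming-weight parity class, and that class contains only one disconnected vector---the zero vector in the triangle picture, the all-ones vector in the theta picture---so at least one reply is safe). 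What the paper's version buys is a completely predetermined pairing strategy with no case analysis; what yours buys is a strategy that works for any connected starting board and any choice of opening region, with no appeal to the isomorphism reduction.
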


\begin{proof}
For this proof, we refer to the labeling of regions/vertices using $r_i$ and crossings with $c_i$ as in Figure~\ref{trefGame}. Note that each connected game board is isomorphic to the one pictured, so it suffices to show that C has a winning strategy on this particular board. Let Player C move first on $r_5$, the region that contains the vertex of degree 2, keeping it as-is. For the remaining moves, we pair $r_1$ with $r_2$ and $r_3$ with $r_4$, and ensure that C follows a strategy such that $\epsilon_1=\epsilon_2$ while $\epsilon_3\neq\epsilon_4$. So, for instance, if D moves on $r_1$ and performs a smoothing swap, then C performs a smoothing swap on $r_2$. Any moves on $r_1$ and $r_2$ that satisfy $\epsilon_1=\epsilon_2$ produce a graph that is identical to the starting graph. If, on the other hand, D moves on $r_3$ and leaves it as-is, then C should perform a smoothing swap on $r_4$. The resulting graph is isomorphic to the original graph. Any other pair of choices following the $\epsilon_3\neq\epsilon_4$ strategy similarly produces an isomorphic graph.
\end{proof}

\begin{proposition}  Suppose the Region Smoothing Swap Game is played on a connected starting board determined by the Borromean rings (pictured in Figure \ref{Borromean}). Then Player C has a winning strategy when playing second.
\end{proposition}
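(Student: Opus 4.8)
The plan is to adapt the mimicking argument used for the figure-eight knot. Since the Borromean rings diagram has $n=6$ crossings and lies on the sphere, the Euler characteristic count $n+2=k$ gives $k=8$ regions, hence exactly eight moves in a complete game. With Player C moving second, the game splits into four rounds in which D moves first and C answers, so C can commit in advance to a partition of the eight regions into four pairs and reply to each move by D with the paired move. No ``free'' opening move is needed here (in contrast to the trefoil and twist-knot propositions, where $k$ was odd), precisely because $k=8$ is even; this is the same situation as the figure-eight proposition.

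First I would fix the labeling of the eight regions/vertices $r_1,\dots,r_8$ and six crossings $c_1,\dots,c_6$ from Figure \ref{Borromean}, read off the checkerboard graph, and record the row vectors $r_i \in (\mathbb{Z}/2)^6$. Because the standard Borromean diagram carries a three-fold rotational symmetry, the eight regions fall into orbits; since $8 \equiv 2 \pmod 3$, there are two fixed regions (the innermost region and the unbounded region) together with two orbits of size three. I would then choose a symmetry-respecting pairing: pair the two fixed regions with each other, and match the two size-three orbits region by region to obtain the remaining three pairs. For each pair I must decide whether C mimics ($\epsilon_i=\epsilon_j$) or anti-mimics ($\epsilon_i\neq\epsilon_j$); tuning these four choices is exactly what makes the strategy win.

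The heart of the argument is verifying closure through the formalism of Equation \ref{eq}, working modulo $2$. Completing a mimicking pair adds either $0$ or $r_i+r_j$ to the current state vector, while completing an anti-mimicking pair adds either $r_i$ or $r_j$; in every case the increment is forced by D. Since $r_j = r_i + (r_i+r_j)$, the set of possible final states $V_8$ is a single coset of the subgroup $G \leq (\mathbb{Z}/2)^6$ generated by the four sums $r_i+r_j$, one per pair. As $\dim_{\mathbb{Z}/2} G \leq 4$, this coset has at most $16$ elements, and I would check that the ``on'' edges of every such vector form a spanning tree of the checkerboard graph; the three-fold symmetry collapses this to only a few genuinely distinct cases. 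Equivalently, and in the spirit of the figure-eight proof, I would exhibit a small collection of connected boards closed under all four paired moves and note that play never leaves this collection.

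The main obstacle is selecting the pairing correctly: an arbitrary symmetric partition need not work, since the increment vectors must interact compatibly with the spanning-tree condition, and the danger is that some $r_i+r_j$ (or $r_i$) introduces a cycle or disconnects the ``on'' subgraph for a particular state in the coset. I expect to resolve this by computing $G$ for the symmetry-respecting pairings and choosing the mimic/anti-mimic pattern for which $V_0+G$ lies entirely inside the set of spanning trees; once that pattern is identified, the remaining verification is a routine finite check.
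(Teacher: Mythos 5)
Your general framework is sound and is essentially the paper's own formalism: commit to four pairs of regions, reply within pairs (mimicking or anti-mimicking), and observe that the reachable final states form a coset $V_0+c+G$, where $G$ is generated by the four pair-sums $r_i+r_j$; Player C wins precisely when that whole coset consists of spanning trees of the checkerboard graph, which here is $K_4$. The gap lies in your restriction to ``symmetry-respecting'' pairings. In the checkerboard graph, the innermost region is a vertex of $K_4$ and the unbounded region is a face (they get opposite colors), one size-three orbit consists of vertices and the other of faces. So every pair in your scheme joins a vertex to a face. A vertex-face pair sums either to the all-ones vector $J$ (vertex paired with the opposite face) or to a $2$-edge perfect matching of $K_4$ (vertex paired with a face containing it); in particular your mandatory pair (inner region, outer region) always contributes $J$, so $J\in G$ for every pairing in your class.

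Now take a starting board whose ``on'' subgraph is a \emph{star}, the three edges at one vertex of $K_4$. These are legitimate connected starting boards (they are spanning trees; see Figure~\ref{BorroGameStart}(b)), and no pairing in your class defends them, no matter how you tune mimic versus anti-mimic. Concretely: with the all-opposite matching and all-mimicking, D swaps the inner region, you must swap the outer region, and the star becomes a triangle (adding $J$), which omits a vertex and loses; the other choices fare no better, since a star plus $J$ and a star plus any perfect matching are both triangles. In general, the offsets $c$ available to you are, modulo $G$, sums of vertex-star vectors, so $V_0+c+G$ is always a coset of $G$ containing an element of the cut space of $K_4$ (the empty graph, a star, or a $4$-cycle); since $J\in G$, every such coset contains the empty graph, a triangle, or a $4$-cycle, hence is never contained in the set of spanning trees. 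Your proposed finite search would therefore come up empty on star boards. The paper's proof handles exactly this case by changing the pairing itself: for star boards it pairs vertices with vertices and faces with faces (the ``shuffled'' pairing $\epsilon_1=\epsilon_7$, $\epsilon_2=\epsilon_8$, $\epsilon_3=\epsilon_5$, $\epsilon_4=\epsilon_6$), so that every pair-sum equals one fixed $4$-cycle, the ``star trek'' graph of Figure~\ref{BorroStartb-reply}(c), and a star plus that $4$-cycle is again a star. So the pairing---not just the mimic/anti-mimic pattern---must depend on the starting board, and for star boards the winning pairing lies outside your symmetry-respecting class.
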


\begin{figure}[htbp] \begin{center}
 \begin{tabular}{ccc}
\includegraphics[scale=.5]{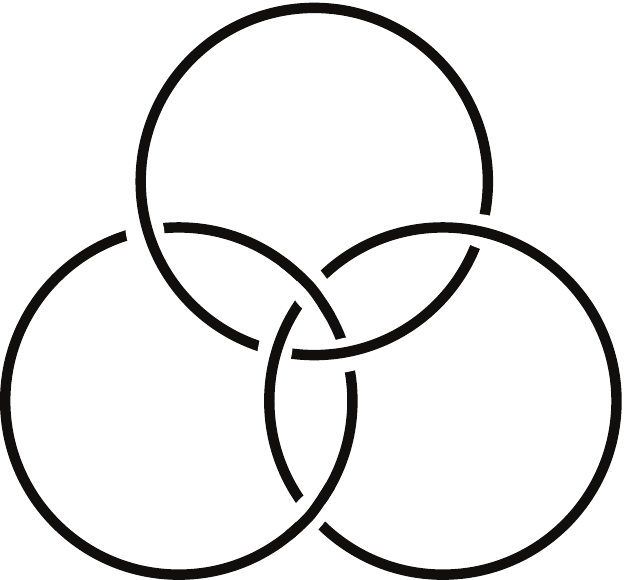}& &  \includegraphics[scale=.5]{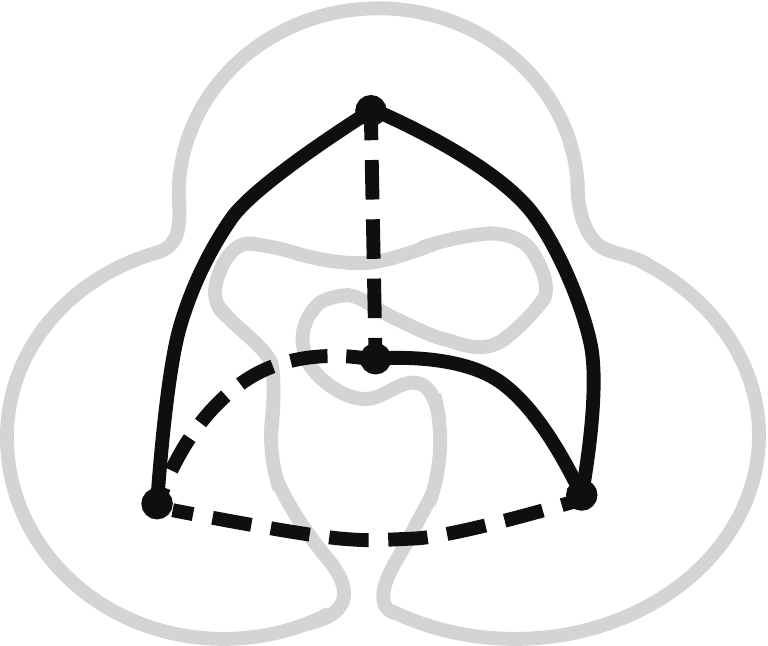} 
\end{tabular}
\caption{{\bf The Borromean rings and a corresponding game board.}}
\label{Borromean}\end{center}
\end{figure}

\begin{figure}[htbp] \begin{center}
 \begin{tabular}{ccc}
(a) \includegraphics[scale=.5]{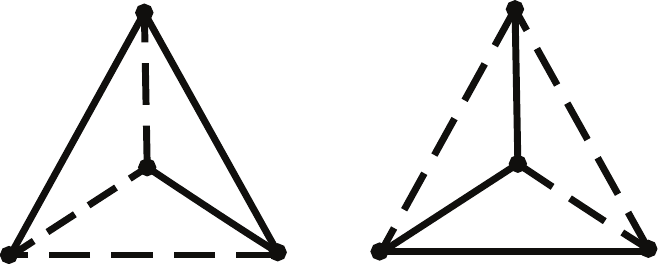}& & (b) \includegraphics[scale=.5]{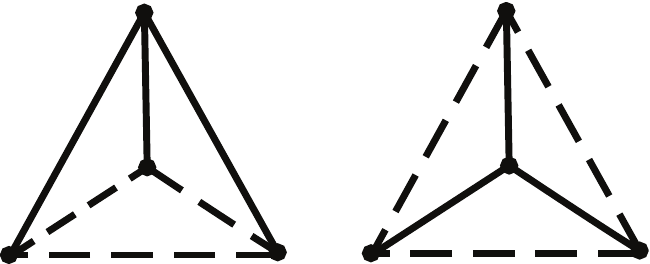} 
\end{tabular}
\caption{{\bf All connected starting game boards for the Borromean rings, up to rotation and reflection.}}
\label{BorroGameStart}\end{center}
\end{figure}

\begin{proof}  We begin with the standard representation of the Borromean rings and the connected game board shown in  Figure~\ref{Borromean}.  Notice that the associated graph---which we see is the complete graph on four vertices, $K_4$---contains two types of spanning trees: those that contain a vertex of degree three (shown in Figure~\ref{BorroGameStart}(b)) and those that do not (shown in Figure~\ref{BorroGameStart}(a)).

\begin{figure}[htbp] \begin{center}
 \begin{tabular}{ccccc}
\includegraphics[scale=.4]{Game2GraphBorromeanRings-eps-converted-to.pdf}& &  \includegraphics[scale=.4]{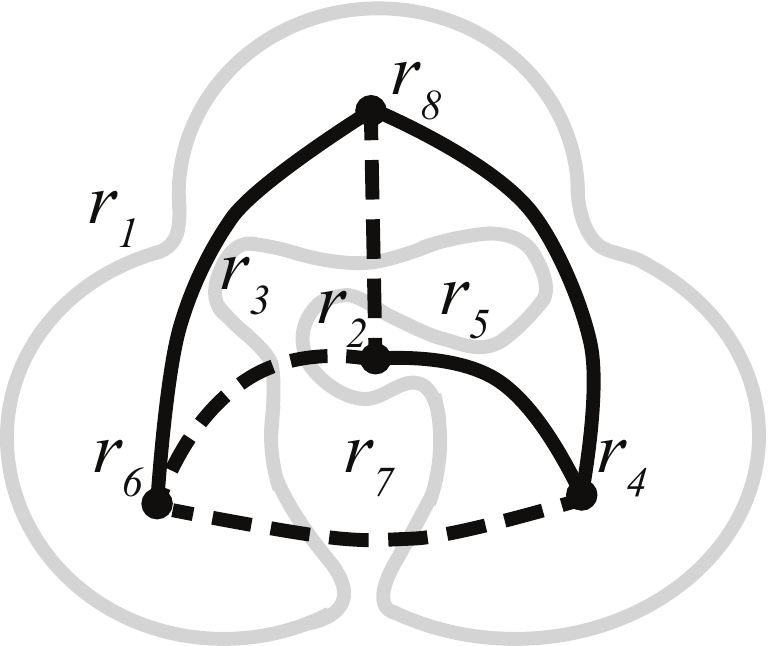} & &  \includegraphics[scale=.4]{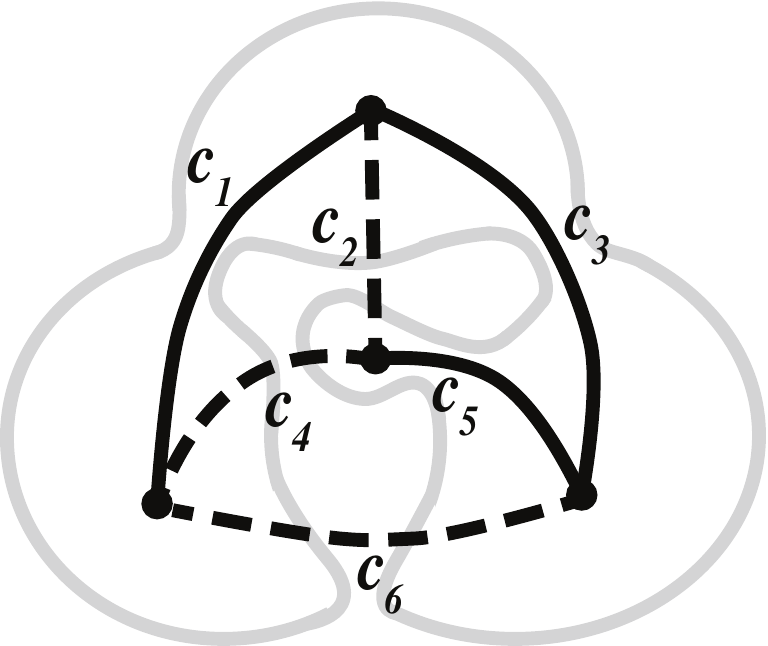} 
\end{tabular}
\caption{{\bf The Borromean rings and a corresponding game board with region/vertex labels and edge labels.}}
\label{BorroNotation}\end{center}
\end{figure}

For the graphs in Figure~\ref{BorroGameStart}(a), a pairing strategy exists that results in a win for Player C playing second.  We describe this strategy using the notation defined in Figure~\ref{BorroNotation}. The defensive strategy for C is to respond to D's moves so that $\epsilon_1 =\epsilon_2$, $\epsilon_3=\epsilon_4$, $\epsilon_5=\epsilon_6$ and $\epsilon_7=\epsilon_8$.  

For each pair of moves, this mimicking strategy results in either changing every edge of $K_4$, when $\epsilon_{2i} =\epsilon_{2i-1} = 1,$ or changing none of the edges in the graph, when $\epsilon_{2i} =\epsilon_{2i-1} = 0$.  For either of the two graphs in Figure~\ref{BorroGameStart}(a), changing every edge of the graph produces the complementary subgraph of the checkerboard graph, which is the other graph from Figure~\ref{BorroGameStart}(a); similarly for rotations and reflections of these graphs.  
 
For the graphs in (b), with a vertex of degree three, the mimicking strategy 
employed above does not work.  This strategy fails because the spanning trees in Figure~\ref{BorroGameStart}(b) have complements that are not trees. Thus, the corresponding game board does not remain connected when moves such as $\epsilon_1 =\epsilon_2=1$ are made.   However, there is a shuffling of the pairings that does result in a win for C, namely $\epsilon_1=\epsilon_7, \epsilon_2 = \epsilon_8, \epsilon_3=\epsilon_5,$ and $\epsilon_4=\epsilon_6$.  To see why this pairing works on either starting board from Figure~\ref{BorroGameStart}(b), we first make the observation shown in Figure~\ref{BorroStartb-reply}.  This figure shows the ``star trek" graph---our name for  the graph in Figure~\ref{BorroStartb-reply}(c)---that when added to either spanning tree that contains a vertex of degree three, yields the other spanning tree that contains a vertex of degree three.  Next, we note that the pairing $\epsilon_1=\epsilon_7=1$ results in adding the star trek graph to the connected starting graph.  The pairings $ \epsilon_2 = \epsilon_8, \epsilon_3=\epsilon_5,$ and $\epsilon_4=\epsilon_6$ also result in adding the star trek graph.  Therefore, this strategy always yields one of the connected spanning trees from Figure~\ref{BorroStartb-reply}(b).
 \end{proof}

\begin{figure}[htbp] \begin{center}
 \begin{tabular}{ccc}
(b) \includegraphics[scale=.5]{Game2GraphBorromeanStart3-eps-converted-to.pdf}& & (c) \includegraphics[scale=.5]{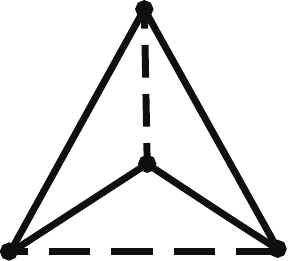} 
\end{tabular}
\caption{{\bf For either graph in (b), when we add the graph in (c) the result is the other graph in (b).  We call the graph in (c) a ``star trek" graph due to it's similarity to the Starfleet insignia from the Star Trek$^{\copyright}$ television and movie series.}}
\label{BorroStartb-reply}\end{center}
\end{figure}

\section{Game Play on the Klein Bottle}

So far, we've focused on game boards that live on the surface of a sphere, but some of the most interesting examples of games we've come across live on more complex surfaces. In particular, there exists an infinite family of game boards on the Klein bottle on which Player C has a winning strategy moving second. Before we describe our game board, let us recall one useful method of representing the Klein bottle: as a square with certain edges identified. In Figure \ref{Klein}, we see a popular representation of a Klein bottle (from Wikipedia) together with two possible polygonal representations, where edges are identified in pairs according to the orientations shown in the diagram. 

\begin{figure}[htbp] \begin{center}
(a) \includegraphics[height=1.5in]{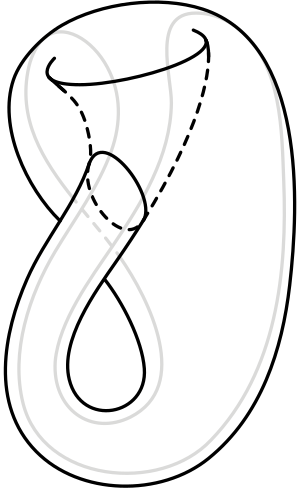}\hspace{.7in}(b) \includegraphics[height=1.2in]{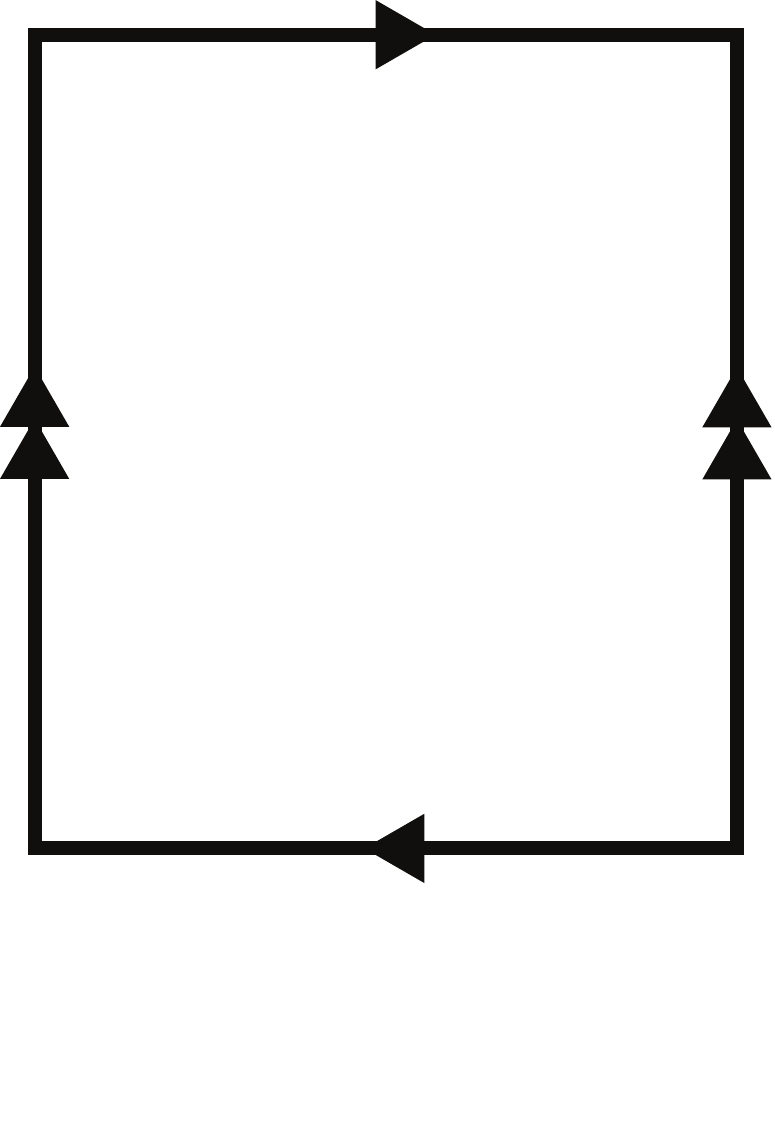}\hspace{.4in} (c) \includegraphics[height=1.2in]{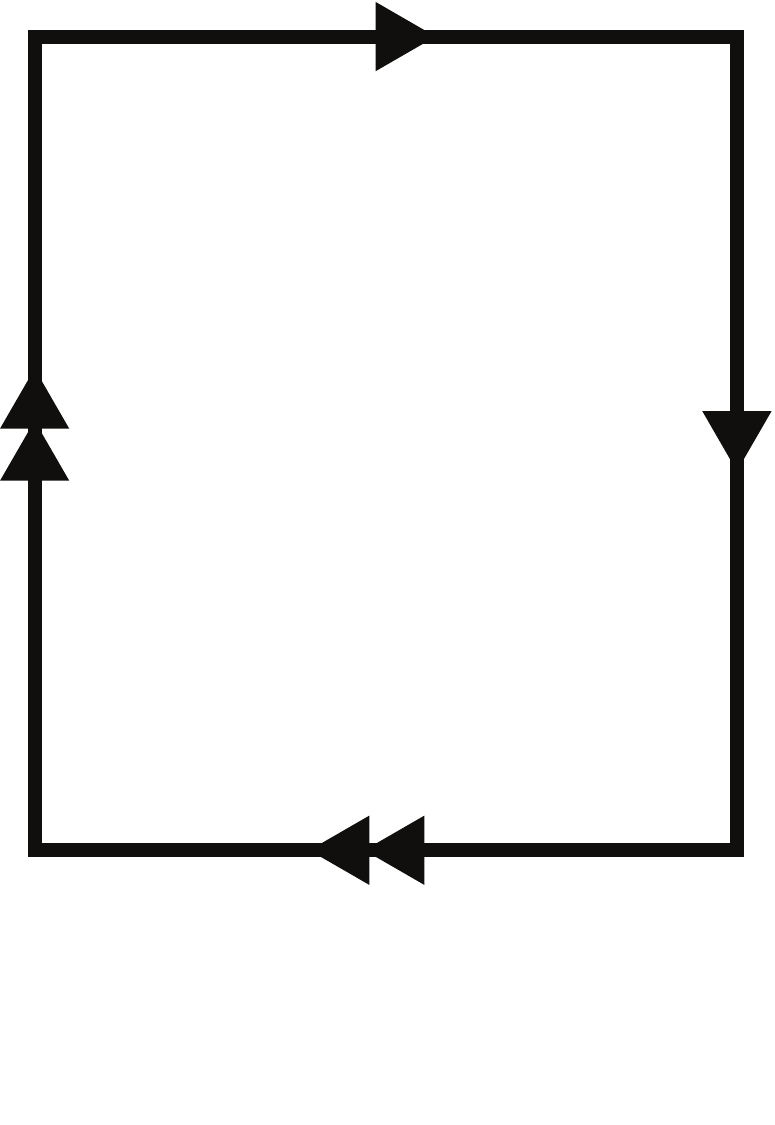}
\caption{{\bf A Klein bottle (a), and two polygonal representations of a Klein bottle (b) and (c).}}
\label{Klein}\end{center}
\end{figure}

To picture a family of game boards on the Klein bottle that we call the ``ladder family," we make use of the polygonal representation in Figure~\ref{Klein}(b).   The two smallest members of this family, $n=2$ (the 2-step ladder) and $n=3$ (the 3-step ladder), are shown in Figure \ref{KleinGame}. We will refer to the horizontal graph edges as the ``steps" of the ladder and the vertical edges as the  ``rails." 

\begin{figure}[htbp] \begin{center}
 \begin{tabular}{ccccc}
region/vertex labels&&edge labels&&smoothed state\\
\hline\\
\includegraphics[width=1.3in]{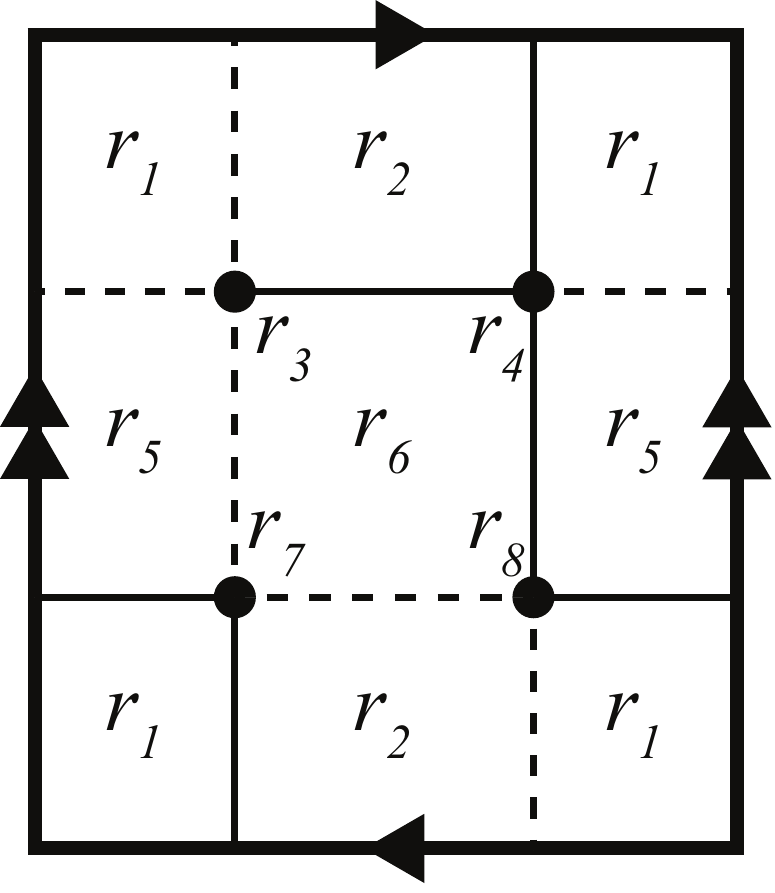}&&\includegraphics[width=1.3in]{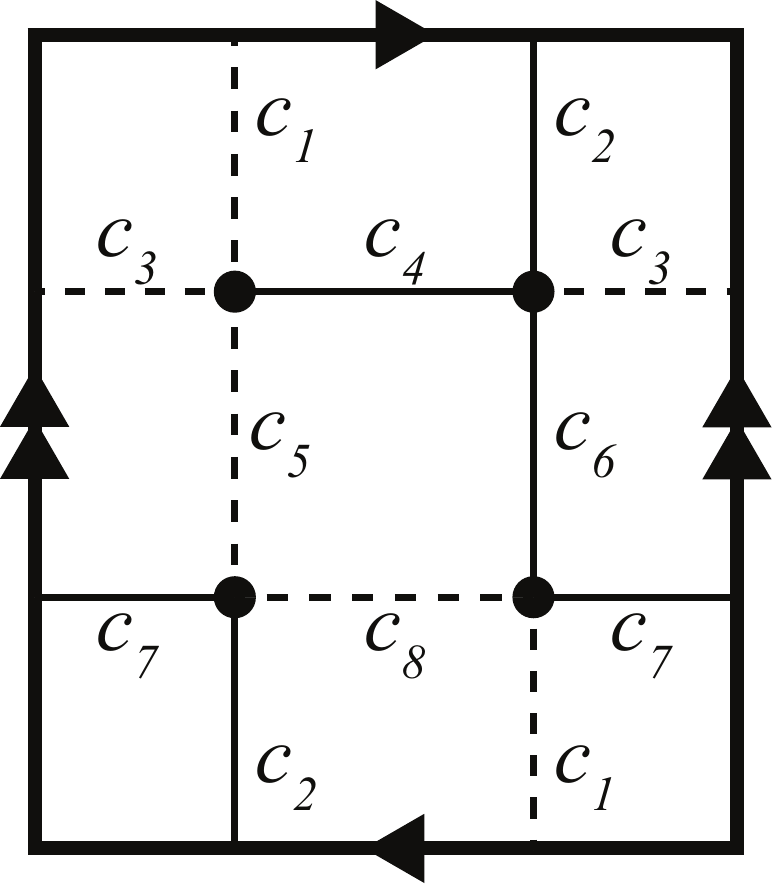}&&\includegraphics[width=1.3in]{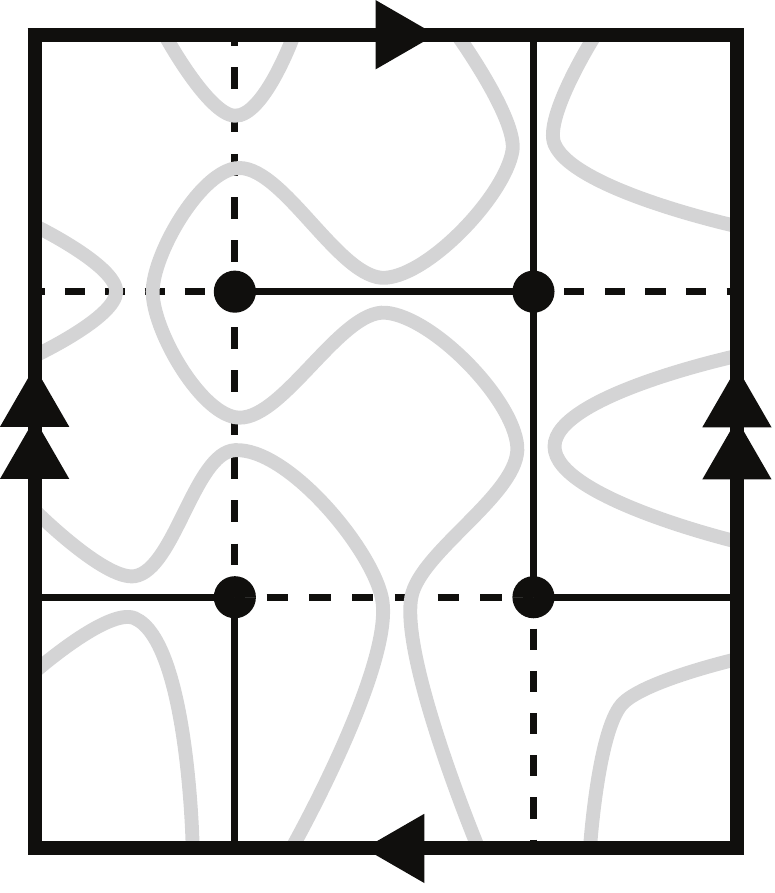}\\

\hline\\
\includegraphics[width=1.3in]{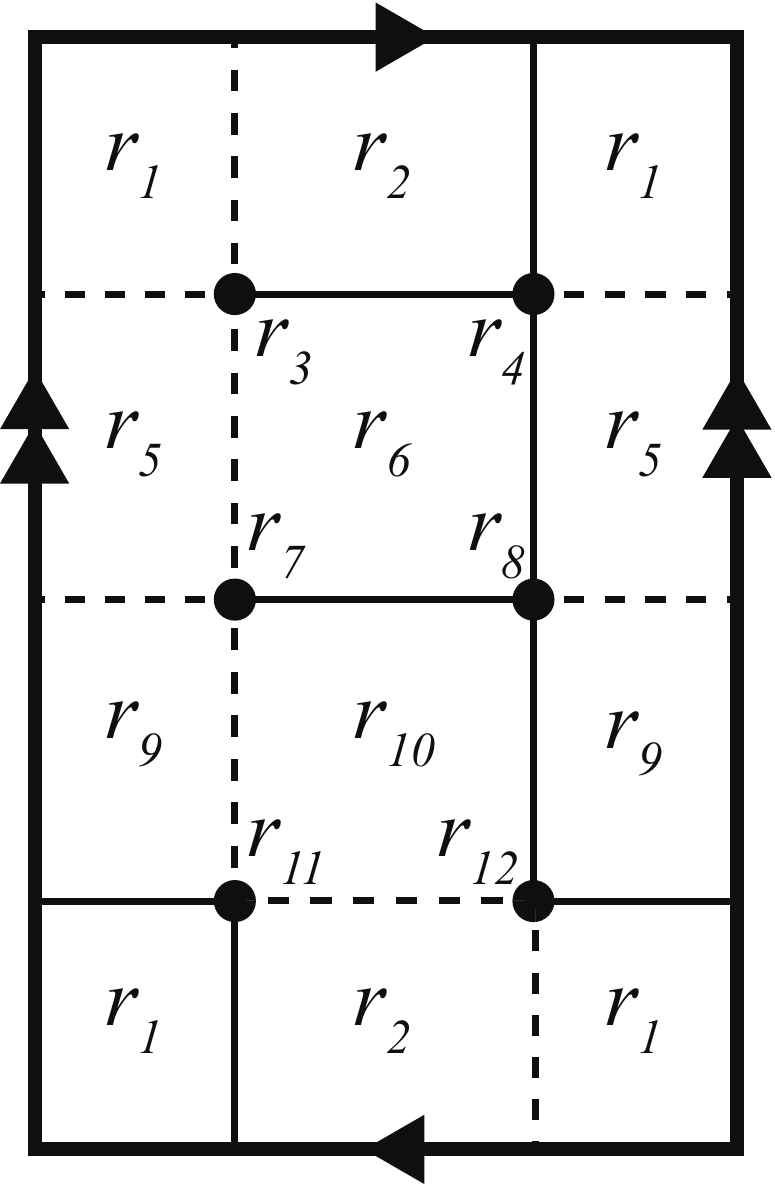}&&\includegraphics[width=1.3in]{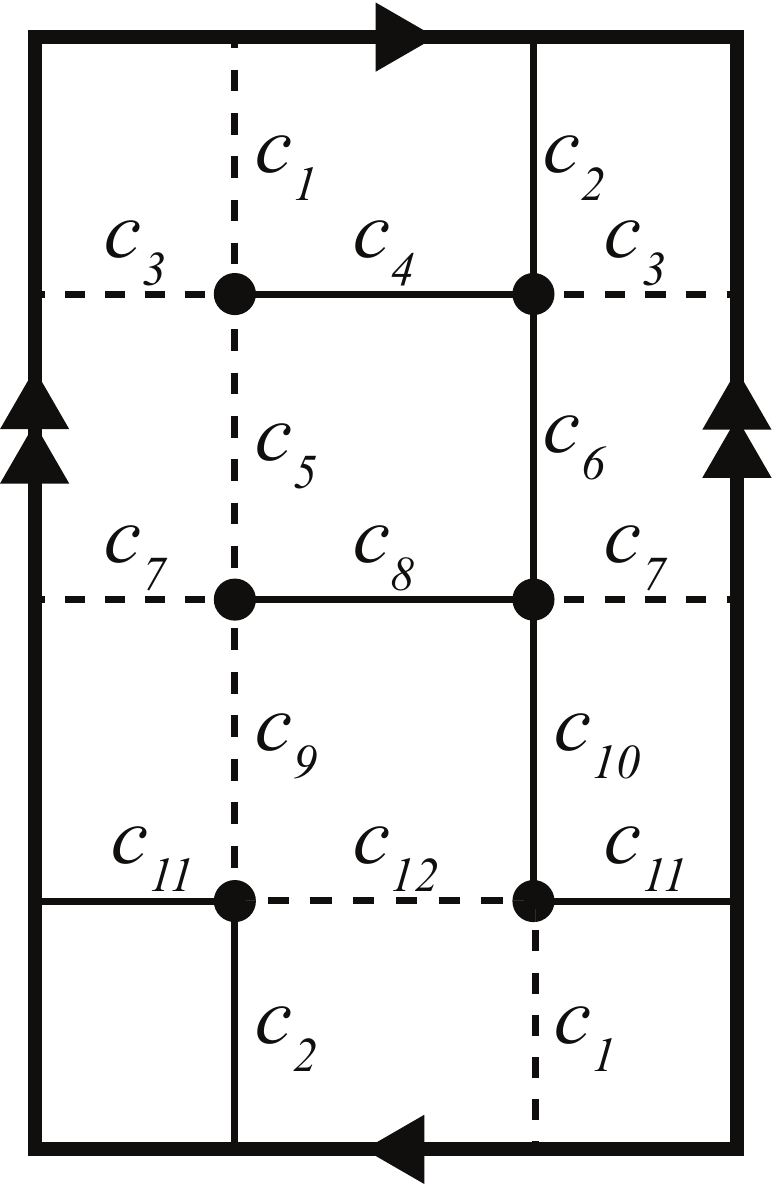}&&\includegraphics[width=1.3in]{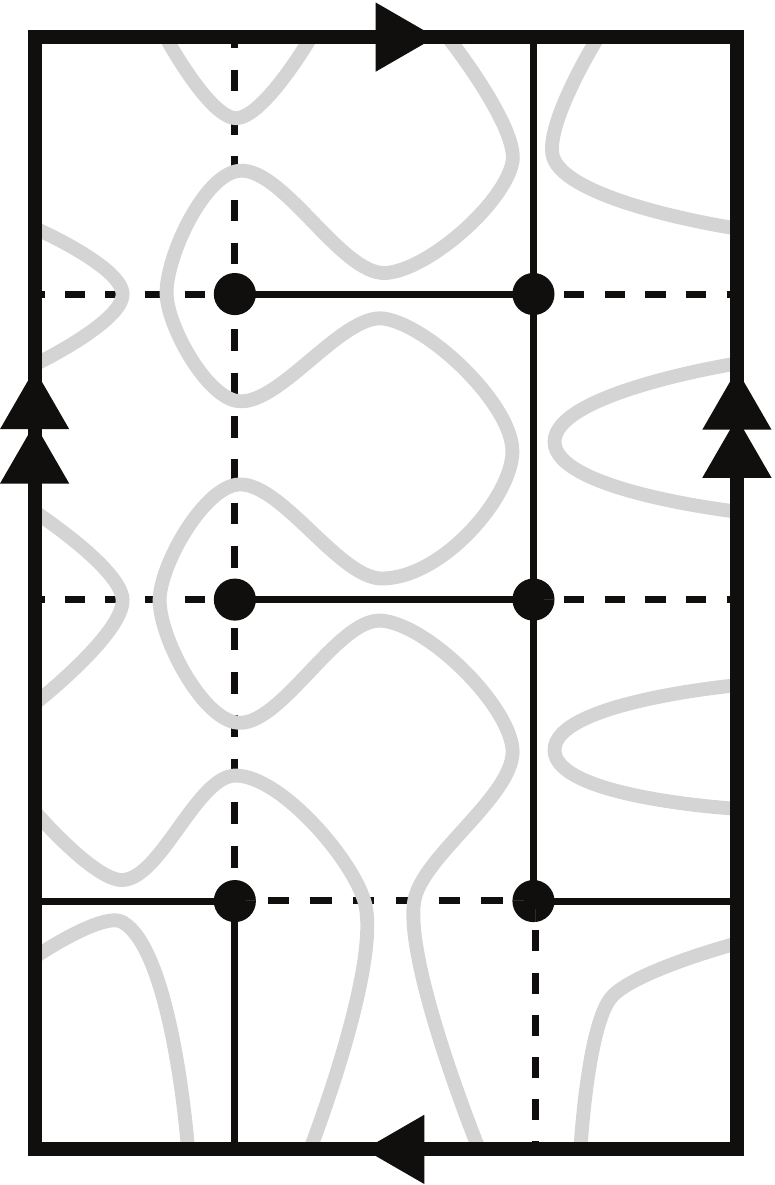}
\end{tabular}
\caption{{\bf Two members---the 2-step ladder (above) and the 3-step ladder (below)---of a family of game boards on the Klein bottle, with region/vertex and crossing labels and corresponding smoothed state.}}
\label{KleinGame}\end{center}
\end{figure}

Unlike game play on the sphere, notice that it is possible for a smoothed state on the Klein bottle to be connected when the selected subgraph contains a {\it cycle}.    For example, the smoothed states in Figure~\ref{KleinGame} are connected, but the graphs contain a cycle. Before further discussion of the game, we make an observation about special game boards, like those in Figure~\ref{KleinGame}, that contain a cycle and always result in a connected smoothed state.

\begin{lemma}\label{ladderlemma} If a subgraph of an $n$-step ladder graph on the Klein bottle contains exactly one horizontal step and exactly one rail at each height of the ladder,  then the smoothed state corresponding to this subgraph is connected. 
\end{lemma}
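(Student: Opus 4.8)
The plan is to recast connectedness of the smoothed state as a statement about the boundary of a surface built from the ``on'' subgraph, and then to produce a single boundary circle by exploiting the non-orientability of the Klein bottle.

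First I would record the combinatorial data. Label the $2n$ vertices of the $n$-step ladder by $L_1,\dots,L_n$ and $R_1,\dots,R_n$, so that the step at height $i$ is the edge $L_iR_i$ and the two rails at height $i$ join consecutive heights. The hypothesis says the ``on'' subgraph $H$ consists of all $n$ steps together with exactly one rail at each height, so $H$ has $2n$ edges on $2n$ vertices. I would check that $H$ is connected and unicyclic: exactly one chosen rail is a ``wrap'' rail crossing the top--bottom identification of Figure~\ref{Klein}(b) (which carries the Klein-bottle flip), and deleting it leaves the steps and the non-wrap rails, which chain the $n$ rungs into a spanning path and hence a spanning tree. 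Thus $H$ is connected and spanning with a single cycle, and that cycle necessarily uses the wrap rail exactly once.

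Next I would invoke the standard ribbon-surface description: the smoothed state is exactly the boundary $\partial\Sigma$ of the surface $\Sigma$ obtained by thickening each shaded region (a vertex of $H$) to a disk and each ``on'' crossing (an edge of $H$) to a band; equivalently, $\Sigma$ is a regular neighborhood of $H$ in the Klein bottle. Since $\Sigma$ deformation retracts onto $H$, it is connected with $\chi(\Sigma)=\chi(H)=2n-2n=0$, so $\Sigma$ is either an annulus or a M\"obius band. The number of components of the smoothed state is then the number of boundary circles of $\Sigma$, namely $2$ in the annulus case and $1$ in the M\"obius case.

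The crux, and the step I expect to be the main obstacle, is ruling out the annulus by showing $\Sigma$ is non-orientable; this is precisely where the topology of the surface, rather than graph connectivity alone, is used, and it is the feature absent on the sphere. It suffices to show the unique cycle of $H$ is a one-sided simple closed curve on the Klein bottle. Using Figure~\ref{Klein}(b), a loop is orientation-reversing exactly when it crosses the flipped identification an odd number of times, and by the first paragraph the cycle crosses that identification once (through the single wrap rail) and along no other edge. Hence the cycle is one-sided, its regular neighborhood is a M\"obius band, and the remaining tree portion of $H$ only attaches disks along boundary arcs (boundary connect sums with disks), so $\Sigma$ is a M\"obius band. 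Therefore $\partial\Sigma$ is a single circle and the smoothed state is connected. I would close by noting that the argument never used which rail was chosen at a given height, so it establishes the lemma for every admissible subgraph simultaneously.
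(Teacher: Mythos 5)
Your proof is correct and follows essentially the same route as the paper's: both isolate the unique cycle of the subgraph (the paper's $G'$, consisting of the rails plus the steps whose endpoints have degree two), show its regular neighborhood is a M\"obius band because the cycle traverses the orientation-reversing top--bottom identification exactly once, and then absorb the pendant step-edges by a deformation retraction (your boundary connect sums with disks), so the smoothed state is the single boundary circle of a M\"obius band. One minor inaccuracy in your setup: the ladder board actually has \emph{two} parallel horizontal edges at each height (one of them crossing the left--right identification), and the hypothesis selects one of the two rather than ``all $n$ steps''; this is harmless, since the left--right gluing is orientation-preserving and therefore does not affect your crossing-parity count for one-sidedness.
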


\begin{proof}  We begin by making a quick observation regarding a simple subgraph of the $n$-step ladder graph. Then we prove the lemma for an arbitrary subgraph of the ladder that contains exactly one horizontal step and exactly one rail at each height.

The simple subgraph we consider first is the graph that contains the edges $c_2, c_6, \dots, c_{4n-2}$ and $c_{4n}$.  For the 3-step ladder, this graph is in Figure~\ref{LemmaGraphs}(a).   Observe that an $\epsilon$-neighborhood of this graph within the Klein bottle is a M\"obius strip, therefore the boundary of this $\epsilon$-neighborhood consists of exactly one connected component.  Also notice that if smoothings were made around this graph they would trace out the boundary of the M\"obius strip, thus one smoothed component would surround this graph.

\begin{figure}[htbp] \begin{center}
 \begin{tabular}{ccccc}

\includegraphics[width=1.1in]{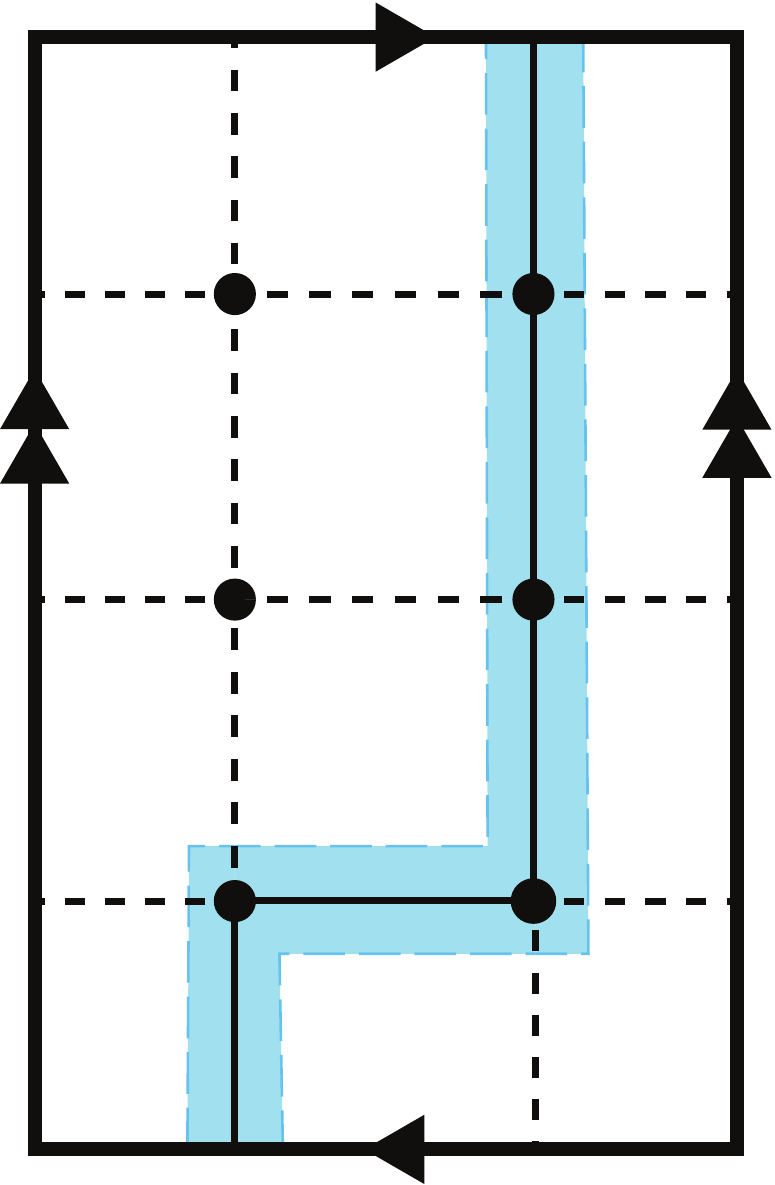}&&\includegraphics[width=1.1in]{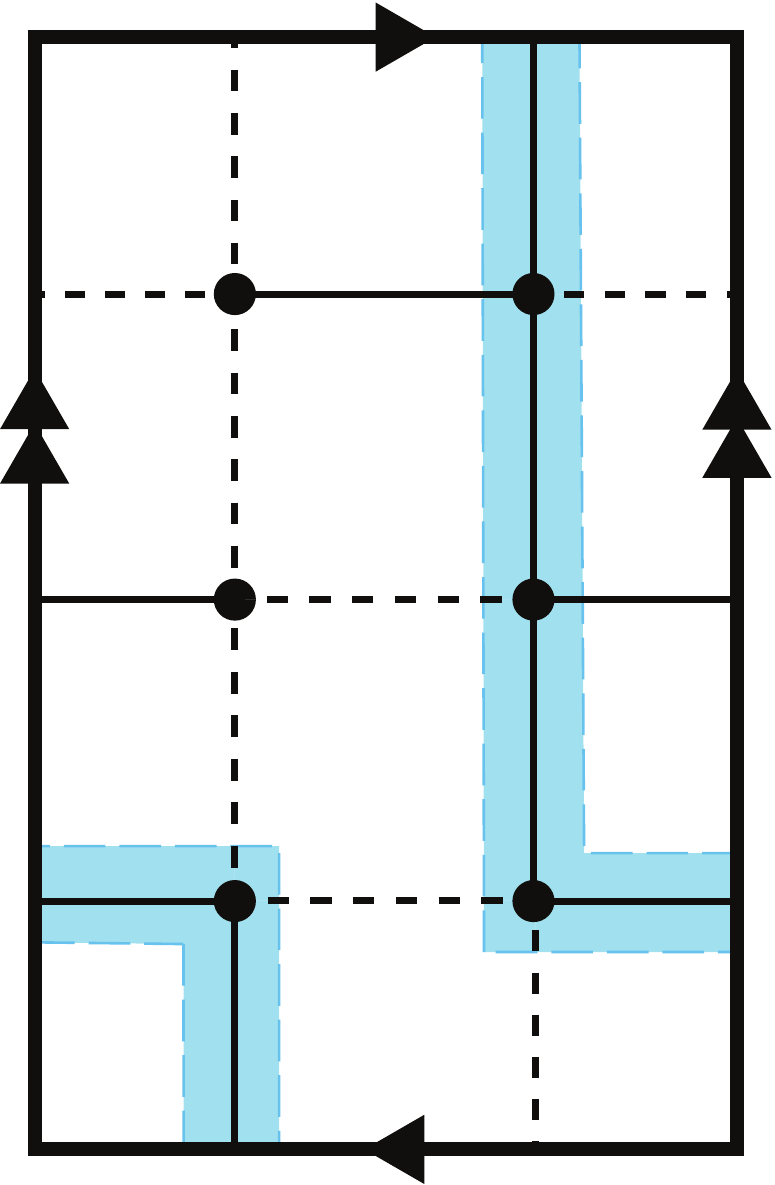}&&\includegraphics[width=1.1in]{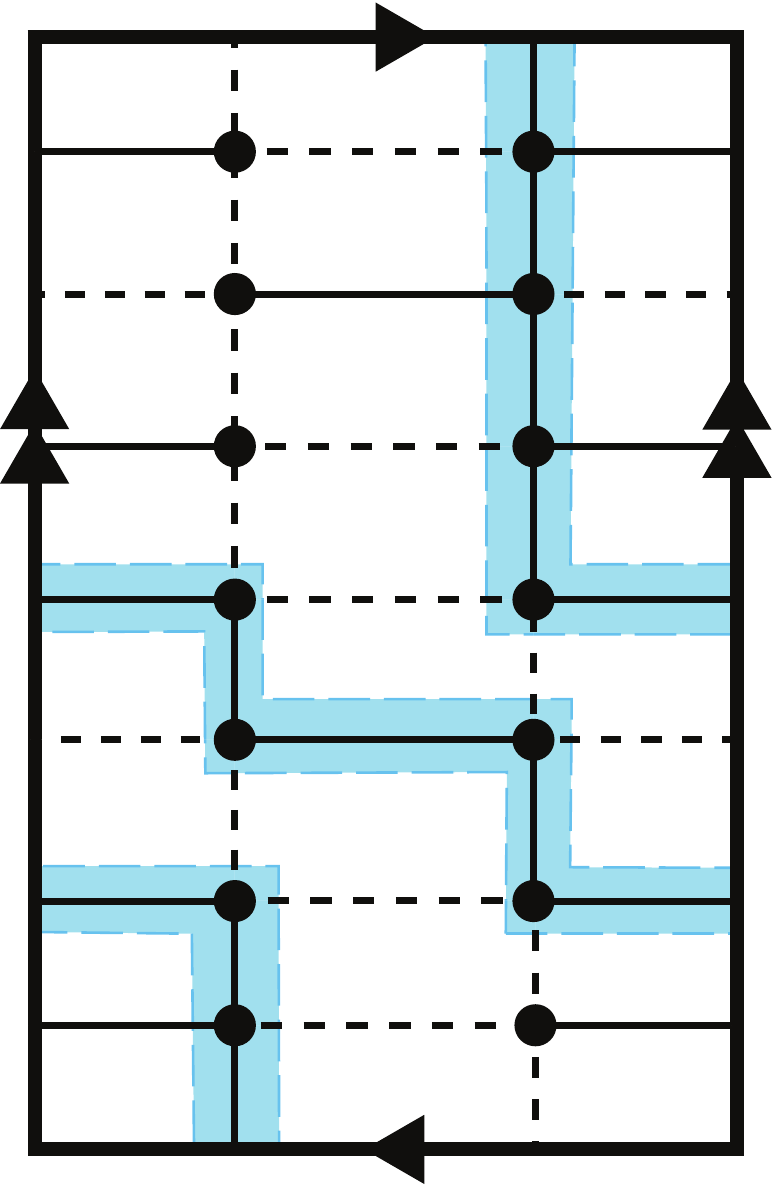} \end{tabular}
\caption{{\bf (a) A simple graph with $\epsilon$-neighborhood homeomorphic to a M\"obius strip. (b-c) Examples of subgraphs of the ladder graph that contain exactly one horizontal step and exactly one rail at each height of the ladder.}}
\label{LemmaGraphs}\end{center}
\end{figure}

To prove Lemma~\ref{ladderlemma}, we consider an arbitrary subgraph, $G$, of the ladder that contains exactly one horizontal step and exactly one rail at each height.  Since the graph contains exactly one step at each height, it cannot contain any vertices of degree four or zero.  Thus, $G$ can only contain vertices of degree one, two, or three, as shown in Figure~\ref{LemmaGraphs}(b) \& (c).    In particular, this implies that every vertex is included in $G$.

Since $G$ contains a single rail at each height, each step of $G$ will either have a rail on each end or two rails on one end of the step.  Thus, each step in $G$ will either have both adjacent vertices of degree two or the step will have vertices of degree three and one.  We create an $\epsilon$-neighborhood around the subgraph $G'$ of $G$ that contains every rail of $G$ and each step of $G$ whose adjacent vertices are both degree two.  Using an inductive argument starting at the bottom of the ladder, this subgraph can be viewed as a non-decreasing path up the ladder that connects up exactly once along the orientation-reversing identifications on the bottom and top of the polygon.  If the $\epsilon$-neighborhood around the subgraph $G'$ is glued along all orientation-preserving identifications within $G'$, we get a connected rectangular strip with orientation-reversing gluings at each end.  Therefore, the $\epsilon$-neighborhood around the subgraph $G'$ is homeomorphic to a  M\"obius strip.  As in the case of the simple graph, smoothings made in the diagram according to the edges in $G'$ will result in a single boundary component around $G'$.  
  
To complete the argument for $G$, we notice that the only edges in $G$ that are not in $G'$ are the degree three and one vertices.  Thus, the graph $G'$ is a connected strong deformation retract of $G$ via the homotopy that continuously shrinks each step in $G - G'$ towards the degree three vertex of that step.  This simple homotopy can be extended to the $\epsilon$-neighborhood of $G$ to prove that the $\epsilon$-neighborhood of $G'$ is a strong deformation retract of the $\epsilon$-neighborhood of $G$. Following this homotopy on $\epsilon$-neighborhoods, the smoothed state corresponding to the graph $G$ is isotopic to the smoothed state corresponding to $G'$.  Therefore, the smoothed state corresponding to $G$ is one connected component.
    \end{proof}

\begin{theorem} If Player C moves second, then Player C has a winning strategy playing the Region Smoothing Swap Game on any game board associated to ladder family of links on the Klein bottle that contains exactly one horizontal step and exactly one rail at each height of the ladder (such as the game boards shown in Figure~\ref{KleinGame}).  
\end{theorem}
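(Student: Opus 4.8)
The plan is to reduce the winning condition to maintaining the hypothesis of Lemma~\ref{ladderlemma}, and then to exhibit an explicit pairing (mimicking) strategy for Player C that preserves this hypothesis after each of her replies. By Lemma~\ref{ladderlemma}, any game state whose set of ``on'' edges contains exactly one step and exactly one rail at each height corresponds to a connected smoothed state. So it suffices to show that C, moving second, can force the terminal state to have exactly one step and one rail at each height; call such a state \emph{valid}. The starting board is valid by hypothesis, so the goal becomes an invariant: keep the board valid after every reply by C. Because order of moves does not affect the outcome (the state vector is a commutative mod-2 sum, as noted earlier), it is enough to analyze the cumulative effect of each \emph{pair} of moves rather than worry about the sequence in which pairs are opened.

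First I would record a bookkeeping observation. Since the board lies on the Klein bottle ($\chi=0$), the region count equals the crossing count, which for the $n$-step ladder is $4n$, an even number. Hence the $4n$ regions/vertices can be partitioned into $2n$ pairs, and because C moves second she can always answer D's move by playing the partner of the region D just selected; as C immediately completes any pair D opens, D's next move always opens a fresh pair whose partner is still available, and C completes the final pair on the last (even-numbered) move. For each pair $\{r_a,r_b\}$ I would prescribe the mimicking relation $\epsilon_a=\epsilon_b$, so that once D plays one element and C plays the other, the net change to the state vector is either $0$ (when D declines the swap) or the fixed vector $r_a+r_b$ (when D swaps)---the choice belongs to D, but I will arrange that both outcomes are harmless.

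The heart of the argument is choosing the pairing so that each active toggle $r_a+r_b$ preserves validity. The key local fact is that each height carries exactly two steps and exactly two rails: toggling both steps at a single height turns the unique on-step off and the unique off-step on, leaving exactly one step on there and touching nothing else, and likewise for toggling both rails at a height. I would therefore pair the $4n$ regions/vertices into $n$ ``step-pairs'' and $n$ ``rail-pairs,'' one of each per height, chosen so that the symmetric difference $r_a+r_b$ of each pair is exactly the two steps (respectively the two rails) at its height---or, more loosely, a disjoint union of such matched sets, which is equally invariant-preserving. The main obstacle, and the step I expect to require the most care, is verifying that such a pairing actually exists: this means reading off the incidence vectors of the ladder's regions and vertices and checking that each prescribed $r_a+r_b$ is the intended matched pair of edges, with the orientation-reversing identifications at the top and bottom of the Klein-bottle polygon accounted for correctly (so that no rail or step is miscounted across the twist). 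Granting this verification, each completed pair of moves sends a valid state to a valid state, so by induction the board is valid after every reply by C, hence at the end of the game; Lemma~\ref{ladderlemma} then certifies that the terminal smoothed state is connected, and C wins.
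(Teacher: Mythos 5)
Your high-level plan coincides with the paper's proof: reduce everything to the invariant of Lemma~\ref{ladderlemma} (exactly one step and one rail ``on'' at each height), observe that the number of regions is even so that C, moving second, can always complete mimicking pairs with $\epsilon_a=\epsilon_b$, and arrange the pairing so that every active toggle $r_a+r_b$ preserves the invariant. But the one step you defer --- ``verifying that such a pairing actually exists'' --- is the entire mathematical content of the theorem, and your primary candidate pairing is not realizable. You ask for pairs $\{r_a,r_b\}$ whose toggle is \emph{exactly} the two steps at one height (or exactly the two rails at one level). No such pair of regions exists on the ladder board: every region (vertex or face of the checkerboard graph) is incident to exactly four edges, and any two regions share at most two of them, so every nonzero toggle $r_a+r_b$ contains at least four edges. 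Concretely, the two vertices at a given height share the two steps there, so their toggle is the four rails at the two adjacent rail levels; the two faces in a given band share their two side rails, so their toggle is the four steps at the two adjacent step levels; and a vertex paired with an adjacent face gives a toggle hitting exactly \emph{one} edge at each of four different levels, which destroys the invariant rather than preserving it. So the pairing must be chosen among same-height vertex pairs and same-band face pairs, and one must check that the resulting four-edge toggles are unions of two complete ``position pairs'' --- this is your fallback clause, not your main construction, and it is never verified.

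The paper's proof is exactly this verification: it pairs $r_{2i-1}$ with $r_{2i}$ (the labeling in Figure~\ref{KleinGame} makes these the same-height vertex pairs and same-band face pairs), notes that an active pair of swaps leaves $c_{2i-1}$ and $c_{2i}$ unchanged while switching precisely $c_{2i-3}$, $c_{2i-2}$, $c_{2i+1}$, $c_{2i+2}$ (indices mod $4n$), and observes that since each of $\{c_{2i-3},c_{2i-2}\}$ and $\{c_{2i+1},c_{2i+2}\}$ is a full complementary pair at its level, the ``exactly one on at each level'' property survives. Your parity bookkeeping (evenness of the region count on the Klein bottle, C completing pairs as second player, commutativity of the mod-2 sum) is all correct and matches the paper; the gap is that without exhibiting this specific pairing and computing its toggles, the argument is a plan rather than a proof, and the plan's preferred form of the pairing would in fact have to be abandoned in favor of the hedge.
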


\begin{proof} First, following Lemma~\ref{ladderlemma}, we note that our goal is to produce a subgraph of the checkerboard graph that contains exactly one rail and exactly one step at each height. Our starting configuration has this property. For the game on this graph, we follow a pairing strategy, setting $\epsilon_{2i-1}=\epsilon_{2i}$ (see labelings in Figure \ref{KleinGame}) so Player C will mimic D's moves in these pairs. 

First of all, if Player D chooses $\epsilon_{2i-1}=0$ or $\epsilon_{2i}=0$ for some $i$, our mimicking strategy preserves the status quo---our graph is unaltered. Suppose, then, that Player D chooses $\epsilon_{2i-1}=1$ or $\epsilon_{2i}=1$ for some $i$. Then Player C will ensure $\epsilon_{2i-1}=\epsilon_{2i}=1$. In the diagram that results from this pair of moves, neither edge $c_{2i-1}$ or $c_{2i}$ has been altered, but four edges have been switched, namely $c_{2i-3}$, $c_{2i-2}$, $c_{2i+1}$, and $c_{2i+2}$ (with subscripts mod $4n$ for the $n$-step ladder). If exactly one of the edges $c_{2i-3}$ and $c_{2i-2}$ was initially in the (bold) subgraph, then exactly one edge will be in the subgraph following the two moves. Similarly for $c_{2i+1}$ and $c_{2i+2}$. Thus, our desired ``one edge at each height" property has been preserved, and the final game board is connected, by Lemma~\ref{ladderlemma}.
\end{proof}

\section{Game Play on the Connect Sum of Two Klein Bottles}

We've just seen an interesting game board on a Klein bottle, but why stop there? Are there game boards on more complex surfaces for which Player C has a winning strategy? We asked just this question and found an intriguing example on the connect sum of two Klein bottles. 

\begin{figure}[htbp] \begin{center}
\includegraphics[height=2in]{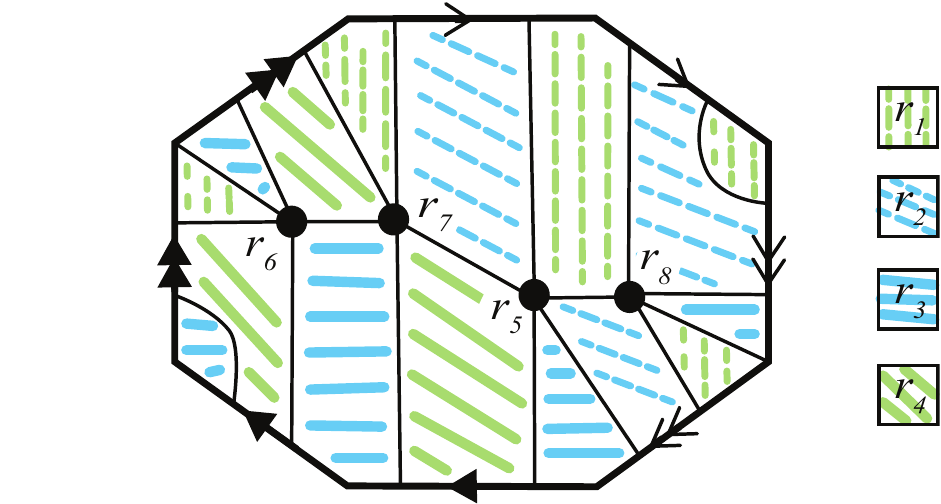}\\
\includegraphics[height=2in]{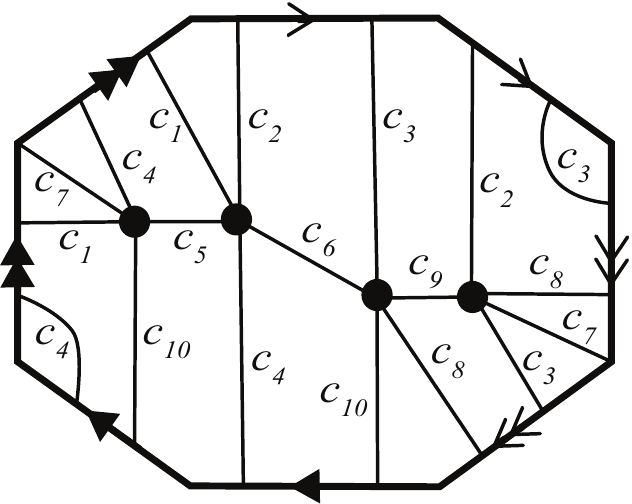}
\caption{{\bf A graph on the connect sum of two Klein bottles, with labels.}}
\label{TwoKlein}\end{center}
\end{figure} The matrix corresponding to this example is as follows, where the $i$th column corresponds to $c_i$ and the $j$th row corresponds to $r_j$.

\begin{gather*}
\begin{bmatrix} 
1&1&1&0&0&0&1&0&1&0\\ 
0&1&1&0&0&1&0&1&1&0\\ 
0&0&0&1&1&0&1&1&0&1\\ 
1&0&0&1&1&1&0&0&0&1\\ 
0&0&1&0&0&1&0&1&1&1\\ 
1&0&0&1&1&0&1&0&0&1\\ 
1&1&0&1&1&1&0&0&0&0\\ 
0&1&1&0&0&0&1&1&1&0\\ 
\end{bmatrix}
\end{gather*}

\begin{theorem} If Player C moves second, then Player C has a winning strategy playing the Region Smoothing Swap Game on the graph on the connect sum of two Klein bottles shown in Figure~\ref{TwoKlein} when the starting game graph corresponds to any of the $V_0$ vectors below.  

\begin{gather*}
V_0^1=\begin{bmatrix} 0 & 1 & 0 & 0 & 1 & 1 & 0 & 0 & 1 & 1\end{bmatrix}\\
V_0^2=\begin{bmatrix} 1 & 0 & 1 & 1 & 0 & 0 & 1 & 1 & 0 & 0\end{bmatrix}\\
V_0^3=\begin{bmatrix} 0 & 1 & 0 & 0 & 1 & 0 & 1 & 0 & 1 & 1\end{bmatrix}\\
V_0^4=\begin{bmatrix} 1 & 0 & 1 & 1 & 0 & 1 & 0 & 1 & 0 & 0\end{bmatrix}\\
\end{gather*}
\end{theorem}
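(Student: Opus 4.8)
The plan is to hand Player C the same kind of mimicking pairing strategy used in the sphere and Klein bottle arguments above, and then to exploit a coincidence among the rows of the matrix in Figure~\ref{TwoKlein} that collapses the reachable end states to a mere handful. Because there are eight regions $r_1,\dots,r_8$ and C moves second, C can partition the regions into four pairs and answer each of D's selections by playing the partner of the region D just chose; the usual induction shows this reply is always legal, since after each of C's moves the regions already played occur in complete pairs, so D is always forced to open a fresh pair. I would use the consecutive pairing $\{r_1,r_2\},\{r_3,r_4\},\{r_5,r_6\},\{r_7,r_8\}$ and have C mimic D within each pair, that is, set $\epsilon_2=\epsilon_1$, $\epsilon_4=\epsilon_3$, $\epsilon_6=\epsilon_5$, and $\epsilon_8=\epsilon_7$. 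Under this convention the $i$th pair contributes either the zero vector or the row-sum $r_{2i-1}+r_{2i}$ to the update in Equation~\ref{eq}, and D alone decides which of the two occurs.

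The conceptual key is that these four row-sums collapse to only two distinct vectors. A direct computation from the matrix in Figure~\ref{TwoKlein} gives $r_1+r_2=r_3+r_4=u$ and $r_5+r_6=r_7+r_8=w$, where $u=[1\,0\,0\,0\,0\,1\,1\,1\,0\,0]$ and $w=[1\,0\,1\,1\,1\,1\,1\,1\,1\,0]$. Writing $\delta_i\in\{0,1\}$ for whether D activates the $i$th pair, the total update is $(\delta_1+\delta_2)\,u+(\delta_3+\delta_4)\,w \pmod 2$, so no matter how D plays, the final state lands in the four-element coset $\{V_0,\,V_0+u,\,V_0+w,\,V_0+u+w\}$. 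This reduces a game tree with $2^4$ leaves to only four candidate end states for each starting vector, and it remains only to check that every one of them is connected.

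The main obstacle is precisely this connectivity check on the connect sum of two Klein bottles. In contrast to the sphere, a connected smoothed state here need not be a spanning tree: several of the states in question have as few as three on-edges on eight regions, so the bold subgraph is graph-theoretically disconnected and connectivity is a genuinely topological property. I would verify it in the spirit of Lemma~\ref{ladderlemma}, passing to the square-with-identifications model of each Klein bottle summand and showing that the on-edges of each state have a regular neighborhood with a single boundary curve---typically by exhibiting a M\"obius-strip (or strip-with-orientation-reversing-ends) neighborhood onto which the on-subgraph deformation retracts, so that the surrounding smoothing closes up into one loop. To keep the bookkeeping finite I would exploit the relations $V_0^1+V_0^2=V_0^3+V_0^4=\mathbf 1$ and $V_0^1+V_0^3=V_0^2+V_0^4=[0\,0\,0\,0\,0\,1\,1\,0\,0\,0]$, which tie the four cosets together and let the neighborhood diagrams for one starting vector be reused for the others rather than redrawn from scratch.

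I expect this neighborhood and boundary-counting verification of the sixteen end states to be the technical heart of the proof, while the pairing-and-collapse step is the idea that makes it tractable: once C's strategy forces the outcome into the coset $V_0+\langle u,w\rangle$, establishing a win becomes a finite check of four states per starting vector rather than an analysis of D's entire strategy space.
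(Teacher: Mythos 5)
Your linear algebra is correct: with the consecutive pairing one indeed has $r_1+r_2=r_3+r_4=u$ and $r_5+r_6=r_7+r_8=w$, so C's mimicking replies confine the final state to the coset $V_0+\{0,u,w,u+w\}$. But this is exactly where the proof breaks down: that coset is \emph{not} contained in $\{V_0^1,V_0^2,V_0^3,V_0^4\}$, and those four vectors are the only boards whose connectivity is available to you. Starting from $V_0^1$, for example, your reachable states are $[1\,1\,0\,0\,1\,0\,1\,1\,1\,1]$, $[1\,1\,1\,1\,0\,0\,1\,1\,0\,1]$, and $[0\,1\,1\,1\,0\,1\,0\,0\,0\,1]$, none of which is a $V_0^k$; since D alone decides which pairs get activated, D can force any one of them. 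So your strategy creates an obligation to prove that twelve new smoothed states (three per starting vector) are connected, and the proposal only sketches how one might check this. Worse, for the states $V_0^k+u+w$ (five on-edges, five off-edges) the check cannot be settled by the incidence matrix or an Euler-characteristic count at all: the bold subgraph there contains doubled edges, and whether their neighborhoods in the surface are annuli or M\"obius bands---hence whether the state has one curve or three---depends on embedding data visible only in Figure~\ref{TwoKlein}. If even one of these twelve states is disconnected, the proposed strategy simply loses. The paper's proof avoids all of this by choosing the pairing $\epsilon_1=\epsilon_3$, $\epsilon_2=\epsilon_6$, $\epsilon_4=\epsilon_8$, $\epsilon_5=\epsilon_7$, for which the pair-sums are $r_2+r_6=r_4+r_8=[1\,1\,1\,1\,1\,1\,1\,1\,1\,1]$ and $r_1+r_3=r_5+r_7=[1\,1\,1\,1\,1\,0\,0\,1\,1\,1]$; these two vectors permute the four given $V_0^k$ among themselves (note $V_0^1+V_0^2=V_0^3+V_0^4$ is the all-ones vector and $V_0^1+V_0^3=V_0^2+V_0^4$ is supported on $c_6,c_7$), so the reachable set is exactly the four boards already known to be connected and no further topology is needed. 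You actually wrote down these relations among the $V_0^k$, but used them only as bookkeeping for your neighborhood diagrams; they are precisely the data that should have dictated which regions to pair.

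A secondary error compounds the problem. You claim that some reachable states, having three bold edges ``on eight regions,'' are graph-theoretically disconnected yet might still yield connected smoothings for topological reasons. Both halves of this are wrong. The checkerboard graph here has only \emph{four} vertices (the shaded regions; the other four regions are faces of the graph), and the three-edge states your pairing reaches, such as $V_0^2+u=[0\,0\,1\,1\,0\,1\,0\,0\,0\,0]$, are in fact spanning trees of that four-vertex multigraph. Conversely, if a bold subgraph genuinely failed to be connected and spanning, then the smoothed state would necessarily be disconnected on \emph{any} surface, since each component of the shaded neighborhood contributes at least one boundary circle; no topological argument could rescue it. Getting this correspondence right is essential, because it is exactly the kind of verification your strategy multiplies twelvefold, whereas the paper's choice of pairing reduces it to the four states granted by the theorem.
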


\begin{proof} First, note that each of the four $V_0^k$ vectors above corresponds to a connected game board. The reader may verify this fact by checking that an $\epsilon$-neighborhood of each graph associated to a $V_0^k$ has a single boundary component.  With this in mind, the strategy we describe for Player C is the following mimicking strategy: $\epsilon_1 =\epsilon_3$, $\epsilon_2 =\epsilon_6$, $\epsilon_4 =\epsilon_8$, and $\epsilon_5 =\epsilon_7$.

Note that, if a region smoothing swap is performed at both $r_2$ and $r_6$, the effect is that {\em every} $c_i$ is swapped. The same is true for the pair $r_4$ and $r_8$. On the other hand, if a region smoothing swap is performed at both $r_1$ and $r_3$, all but $c_6$ and $c_7$ will be swapped. The same is true for $r_5$ and $r_7$. 

Now, if we begin with $V_0^1$ and all $c_i$'s are swapped, the resulting vector is $V_0^2$ (and vice versa). Similarly for $V_0^3$ and $V_0^4$. Moreover, if all but $c_6$ and $c_7$ are swapped in $V_0^1$, the result is $V_0^3$. Similarly for $V_0^2$ and $V_0^4$. Together, this implies that if Player C follows a mimicking strategy in the pairs of $r_i$'s designated above starting with any of the $V_0^k$ vectors, any pair of moves produces a game board that is one of the four $V_0^k$'s. Since these four vectors all represent connected game boards, Player C has a winning strategy.
\end{proof}

\section{Related Results}\label{sec:related}
As we discussed in the introduction, the Region Smoothing Swap Game is a variation on the Link Smoothing Game. In \cite{link-smooth}, game boards were partially classified according to their outcome classes. Here, we provide a proof of completing the classification of Link Smoothing game boards.

\begin{theorem}\label{link-smoothing-final} Let $G$ be a connected, planar graph associated to a link shadow $D$.  If $G$ represents a $\mathcal{P}$-position game (i.e., a game in which the second player has a winning strategy), then $G$ is composed of two edge-disjoint spanning trees.
\end{theorem}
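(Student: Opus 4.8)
The plan is to prove the contrapositive: I will show that if the checkerboard graph $G$ is \emph{not} the edge-disjoint union of two spanning trees, then the Link Smoothing Game on $G$ is an $\mathcal{N}$-position, so the first player wins. Throughout I use the correspondence recorded in Section~\ref{graphs} and in \cite{link-smooth}, that on the sphere a smoothed state is connected exactly when its set of ``on'' edges is a spanning tree of $G$; dually, the ``off'' edges then form a spanning tree of the planar dual $G^{*}$. This self-duality is what makes the two players' objectives comparable, so that it is enough to exhibit, from the first move onward, a winning strategy for the player aiming to disconnect.

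The first step is to pin down the edge count. If $G$ carries two edge-disjoint spanning trees then necessarily $n = 2(|V(G)|-1)$, where $n$ is the number of crossings, which in particular forces $n$ even and $|V(G)| = |F(G)|$. I would first dispose of the case $n \ne 2(|V(G)|-1)$ by a short parity-and-counting argument, showing the disconnecting player can force either too few or too many ``on'' edges. The substantive case is $n = 2(|V(G)|-1)$ \emph{together with} the failure of the decomposition. Here I invoke the Nash--Williams--Tutte theorem: $G$ fails to split into two edge-disjoint spanning trees precisely when there is a partition $\mathcal{P} = \{V_1, \dots, V_r\}$ of $V(G)$ whose number of crossing edges is at most $2(r-1)-1$. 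I fix such a \emph{deficient} partition as the resource for the first player's strategy.

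Next I translate the deficiency into a win by recasting the game, via the duality above, as a Shannon switching game on the quotient obtained by contracting each block $V_i$. This quotient has $r$ vertices and at most $2(r-1)-1 < 2(r-1)$ edges, so it has no two edge-disjoint spanning trees. The disconnecting player restricts attention to the crossing edges, coloring them ``off'' and mimicking the connector so as to keep the quotient's ``on'' edges from ever joining all $r$ blocks; by Lehman's solution of the Shannon switching game, the Cut player wins on a graph lacking two disjoint spanning trees. I would phrase this as an explicit response strategy and then verify that the resulting final state has a monochromatic cut between blocks that cannot be joined by ``on'' edges, hence is disconnected, so by Lemma-type reasoning the smoothed state has more than one component.

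The main obstacle is exactly this last translation. Unlike the classical Shannon switching game, the Link Smoothing Game is \emph{non-monotone}: an excess of ``on'' edges is as fatal to connectivity (it forces a cycle) as a deficit. Reconciling this with the monotone matroid criterion is the crux, and the way through is the $G \leftrightarrow G^{*}$ symmetry, which lets the first player present an under-spanning cut in $G$ and an over-spanning cycle (an under-spanning cut in $G^{*}$) as two faces of the same deficiency; whichever the connector defends, the Nash--Williams--Tutte shortage guarantees the other succeeds. A secondary technical point, worth isolating as a short lemma, is that contracting the blocks $V_i$ is faithful to the game---that optimal play never needs to spend a move inside a block---so that the reduction to the quotient is legitimate.
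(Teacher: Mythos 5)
Your route is genuinely different from the paper's, and its skeleton is viable. The paper argues by contradiction: a $\mathcal{P}$-position forces an even number of crossings; Nash--Williams--Tutte turns the failure of the decomposition into a deficient partition; this is converted into the sparseness inequality $|E(G)| < 2(|V(G)|-1)$; and Theorem 6 of \cite{link-smooth} is then cited to conclude the board is an outright win for the disconnecting player, contradicting $\mathcal{P}$. You instead extract an explicit strategy from the deficient partition itself, which is self-contained (it does not lean on the earlier paper's classification). Indeed, once the details are filled in, your approach is clean: with blocks $V_1,\dots,V_r$ and $|E_P(G)| \le 2(r-1)-1$, the disconnector moving first smooths an unsmoothed edge of $E_P(G)$ ``off'' whenever one exists and plays arbitrarily otherwise; since he moves first, at most $\lfloor |E_P(G)|/2 \rfloor \le r-2$ edges of $E_P(G)$ can end the game ``on'', so the on-edges cannot join all $r$ blocks and the final state is disconnected. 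That contradicts the $\mathcal{P}$-position hypothesis (the connector was supposed to win moving second), which is all the contrapositive requires.

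However, three of your steps are flawed as stated. (1) The target is mislabeled: what your strategy can establish is that the disconnector wins moving first, not that the board is an $\mathcal{N}$-position---on a deficient board the connector cannot win moving first either, and the $G \leftrightarrow G^{*}$ duality relates the game on $G$ to the game on $G^{*}$, so it does not equate the two players' first-move prospects on the same board. This slip is harmless only because ``disconnector wins moving first'' already negates ``second player always wins.'' (2) The step you yourself call the crux---reconciling non-monotonicity with the monotone Shannon/Lehman theory by playing a cut in $G$ off against a cycle in $G^{*}$---is a hope, not an argument; fortunately it is unnecessary, because you have the direction of the difficulty backwards. Non-monotonicity only gives the disconnector \emph{extra} ways to win (an on-cycle is as fatal to the connector as a failed cut), so a Cut-style win that merely prevents the on-edges from joining all blocks already finishes the job; the ``too many on-edges'' clause is needed only in the easy case $n > 2(|V(G)|-1)$, where ``always smooth on'' forces at least $|V(G)|$ on-edges and hence a cycle. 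For the same reason Lehman's theorem is overkill: the quotient's edge deficiency hands Cut the win by the counting above, with no matroid machinery. (3) Your faithfulness lemma is false as stated: in the Link Smoothing Game every crossing must be smoothed by game's end, so moves inside blocks cannot be avoided by either player. What the proof needs---and what the greedy strategy supplies---is that the disconnector's plan is indifferent to such moves: a connector move inside a block simply hands the disconnector a free turn to kill another edge of $E_P(G)$.
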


To prove the theorem, we use a result of Nash-Williams~\cite{NW} and separately Tutte~\cite{Tutte},  that gives a necessary and sufficient condition for a graph $G$ to have two edge-disjoint spanning trees.  Before stating the conditions, though, we need some notation for a special graph associated to $G$ that is defined in terms of a partition of the vertex set of $G$.  

Let $G$ be a graph with vertex set $V(G)$ and edge set $E(G)$.  We denote the number of vertices and edges in $G$ by $|V(G)|$ and  $|E(G)|$ respectively. For a partition $P$ of $V(G)$, we define $E_P(G)$ as the set of edges that join vertices belonging to different members of $P$.  The graph $G_P$ is defined as the graph with vertex set $P$ and edge set $E_P(G)$.  The Nash-Williams and Tutte result can now be stated as follows.

\begin{theorem}[Nash-Williams, Tutte] A graph $G$ has $k$ edge-disjoint spanning trees if and only if 
$$|E_P(G)| \geq k (|P|-1)$$
for every partition $P$ of  $V(G).$
\end{theorem}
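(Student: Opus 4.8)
The statement is a biconditional, and I would handle the two implications separately since the forward direction is elementary while the converse carries essentially all the weight. Throughout I write $n = |V(G)|$, and for a set $F \subseteq E(G)$ I let $c(F)$ denote the number of connected components of the spanning subgraph $(V(G),F)$, so that $G$ connected means $c(E(G)) = 1$.

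For necessity, suppose $G$ contains edge-disjoint spanning trees $T_1,\dots,T_k$ and fix a partition $P$ with parts $V_1,\dots,V_{|P|}$. Contracting each part of $P$ to a single vertex turns $G$ into a multigraph on $|P|$ vertices whose non-loop edges are exactly the members of $E_P(G)$. Each $T_i$ is a connected spanning subgraph of $G$, so its image under this contraction is a connected spanning subgraph of the contracted graph; such a subgraph has at least $|P|-1$ non-loop edges, and all of these lie in $E_P(G)$. Since the $T_i$ are pairwise edge-disjoint, their contributions to $E_P(G)$ are disjoint, and summing over $i$ gives $|E_P(G)| \ge k(|P|-1)$, as required.

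For sufficiency the plan is to pass to the graphic matroid $M = M(G)$ on ground set $E = E(G)$, whose rank function is $r(F) = n - c(F)$. The key reformulation is that $G$ has $k$ edge-disjoint spanning trees if and only if the $k$-fold matroid union $M^{\vee k}$ has rank $k(n-1)$: a maximum independent set of $M^{\vee k}$ decomposes into $k$ forests, and the only way their sizes can total $k(n-1)$ is for each to be a spanning tree. The matroid union theorem of Nash--Williams and Edmonds then evaluates this rank as $r_{M^{\vee k}}(E) = \min_{F \subseteq E}\bigl(|E \setminus F| + k\,r(F)\bigr)$, and since this quantity is always at most $k(n-1)$ (take $F = E$), the existence of $k$ edge-disjoint spanning trees becomes equivalent to the inequality $|E \setminus F| \ge k(c(F)-1)$ holding for every $F \subseteq E$. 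It then remains to check that this edge-subset condition is equivalent to the stated partition condition, which is a routine translation: given $F$, taking $P$ to be the partition of $V(G)$ into the components of $(V(G),F)$ gives $|P| = c(F)$ and $E_P(G) \subseteq E \setminus F$, so the partition inequality forces the subset inequality; conversely, given $P$, setting $F = E(G) \setminus E_P(G)$ gives $E \setminus F = E_P(G)$ and $c(F) \ge |P|$, so the subset inequality forces the partition inequality.

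The main obstacle is the sufficiency direction, concentrated entirely in the rank formula for $M^{\vee k}$; everything on either side of it is bookkeeping, but that formula is where the genuine combinatorics sits. If one prefers not to quote the matroid union theorem as a black box, I would instead establish it in place by the standard augmentation argument: take a packing of $k$ edge-disjoint forests of maximum total size and, assuming that size falls short of $k(n-1)$, use exchange sequences that shift edges between the forests to isolate a set of vertices whose induced partition violates $|E_P(G)| \ge k(|P|-1)$. Setting up the exchange-reachability relation cleanly, and verifying that maximality of the packing really does deliver a deficient partition rather than merely failing to augment, is the delicate step on that self-contained route.
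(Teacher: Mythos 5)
Your proposal is correct, but a comparison with ``the paper's proof'' is moot here: the paper does not prove this statement at all. It quotes the Nash-Williams--Tutte theorem as a known black box, citing the original 1961 papers, and only uses it in the proof of Theorem~\ref{link-smoothing-final}. So you have supplied an argument where the paper supplies a citation. Your necessity direction (contracting the parts of $P$ and counting non-loop edges in the images of the edge-disjoint trees) is the standard elementary argument and is complete as written. Your sufficiency direction is the standard modern derivation via the matroid union theorem: the reformulation that $k$ edge-disjoint spanning trees exist iff $r_{M^{\vee k}}(E)=k(n-1)$ is right (each forest has at most $n-1$ edges, forcing equality termwise), and your two-way translation between the edge-subset condition $|E\setminus F|\ge k(c(F)-1)$ and the partition condition is careful and correct --- in particular you verify both $E_P(G)\subseteq E\setminus F$ in one direction and $c(F)\ge |P|$ in the other, which is where sloppy write-ups go wrong. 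The honest caveat, which you yourself flag, is that this route concentrates all the combinatorial content in the matroid union rank formula of Nash-Williams and Edmonds, a theorem of essentially the same depth as the statement being proved (and one Nash-Williams developed in part to subsume results of this kind); the original proofs of Tutte and Nash-Williams were instead direct exchange/induction arguments of the sort you sketch as an alternative. Logically there is no circularity, since Edmonds' augmenting-path proof of matroid union is independent, so quoting it is legitimate --- but if the exercise was to produce a self-contained proof, the augmentation argument you defer in your final paragraph is the actual work, and it remains unexecuted. What your approach buys over the classical one is modularity and brevity; what it costs is that, as written, you have reduced one named theorem to another rather than proved either from scratch.
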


\begin{proof}[Proof of Theorem~\ref{link-smoothing-final}]  We begin with the supposition that $G$ is a $P$-position graph for the link smoothing game. The definition of  $P$-position implies that the player with the goal to keep the diagram connected has a winning strategy when moving second on the given game board.  Such a winning strategy can only exist if there are an even number of edges in the graph $G$, else the player moving last would be the player with goal to disconnect and such a player can always disconnect the diagram on the final move.

By way of contradiction, we suppose that $G$ does not consist of two edge disjoint spanning trees.  Then the result of Nash-Williams and Tutte implies the existence of a partition $P$ of the vertices of $G$ such that $|E_P(G)| < 2(|P| -1)$.  Since the set $E_P(G)$ is a subset of the edges of $G$ and $|P|$ is less than $|V(G)|$, we can conclude  $|E(G)|\leq |E_P(G)| < 2(|P| -1) \leq 2(|V(G)| -1).$  Hence,  $$|E(G)| <  2(|V(G)| -1).$$  By Theorem 6 in~\cite{link-smooth}, the previous inequality implies the graph represents an $L$-position diagram.  This contradicts our $P$-position supposition.
\end{proof}

\section{Questions for Further Research}

In doing research on the Region Smoothing Swap Game, our goal has been to find examples of link diagrams on surfaces on which Player C has a winning strategy. Since Player D tends to have an advantage in this game, finding such examples can be challenging. What is special about examples for which Player C has a winning strategy? An analysis akin to the one begun for the Link Smoothing Game in \cite{link-smooth} which was completed in Section \ref{sec:related} above would be interesting. Such an analysis will likely be more challenging to perform, however, in the setting of the Region Smoothing Swap Game on surfaces. It would be especially interesting to know more about the relationship between examples on which C can win and the surfaces on which these link smoothings live.

This topic provides a wealth of other open problems for those who are curious about variations on the Region Smoothing Swap Game. Just as with the Link Smoothing Game, the players' goals can be changed or the allowable moves can be modified to create a new game to study. We encourage our readers to invent and study their own games with knots, links, graphs, and surfaces!

 \section{Acknowledgements.}  The authors would like to thank the Simons Foundation (\#426566, Allison Henrich) for their support of this research.

\bibliographystyle{amsalpha}

 \end{document}